\documentclass{amsart}
\usepackage{amsthm}
\usepackage[utf8]{inputenc}
\usepackage{appendix}
\usepackage{cite}
\usepackage{amssymb,amsmath,amsthm}
\usepackage{hyperref}
%\usepackage{mathtools}
%\usepackage[spanish]{babel} 
%\usepackage[all]{xy}
%\usepackage[all,color]{xypic}
%\usepackage{xcolor}
%\xyoption{arc}
%\usepackage{subfig}
%\usepackage[enableskew]{youngtab}
%\usepackage{ytableau}

%\epigraphfontsize{\small\itshape}
%\setlength\epigraphwidth{8cm}
%\setlength\epigraphrule{0pt}

%\setlength{\oddsidemargin}{0.15in} \setlength{\evensidemargin}{0cm}
%\setlength{\marginparwidth}{28mm}
%\setlength{\marginparsep}{28mm}
%\setlength{\marginparpush}{25mm}
%\setlength{\topmargin}{0in}
%\setlength{\headheight}{0pt}
%\setlength{\headsep}{15mm}    
%\setlength{\textheight}{21cm}
%\setlength{\textwidth}{6.0in}
%\setlength{\parskip}{4pt}

%\makeatletter
%\renewcommand*\env@matrix[1][*\c@MaxMatrixCols c]{%
%  \hskip -\arraycolsep
%  \let\@ifnextchar\new@ifnextchar
%  \array{#1}}
%\makeatother

\newcommand{\amsprimary}[1]{{\footnotesize\noindent AMS 2010 \textit{Mathematics subject
classification:} Primary #1\vspace{1pc}}}
\newcommand{\keywordsnames}[1]{{\footnotesize\noindent\textit{Key words:} #1\vspace{1pc}}}

\newtheorem{theorem}{Theorem}
\newtheorem{teo}{Theorem}
\newtheorem{prop}[teo]{Proposition}

\newtheorem{lemma}[teo]{Lemma}

\theoremstyle{definition}

\theoremstyle{remark}

\title[Logarithmic diffusion]
{On the one dimensional Logarithmic diffusion equation with nonlinear Robin boundary conditions}
\author{Jean Cortissoz}
\address[Jean Cortissoz]{Universidad de los Andes, Bogot\'a DC, COLOMBIA}
\email[Corresponding author]{jcortiss@uniandes.edu.co}
\author{C\'esar Reyes}
\address[C\'esar Reyes]{Universidad Manuela Beltr\'an, Bogot\'a DC, COLOMBIA}
\email{ca.reyes10@uniandes.edu.co}
%\thanks{The second author wants to thank the Fondo de Investigaciones de la Facultad de  Ciencias  de  la  Universidad  de  los  Andes for funding this project by %means of "Convocatoria  2017-2  para  la  Financiación  de proyectos de  Investigación  Categoría  Estudiantes de Doctorado", under the project 
%"Flujo de Ricci sobre el cilindro con frontera".}
%\date{}
\date{}

\begin{document}

\maketitle

\begin{abstract}
In this paper we investigate the one dimensional (1D) logarithmic diffusion
equation with nonlinear Robin
boundary conditions, namely,
\[
\left\{
\begin{array}{l}
\partial_t u=\partial_{xx} \log u\quad
\mbox{in}\quad \left[-l,l\right]\times \left(0, \infty\right)\\
\displaystyle
\partial_x u\left(\pm l, t\right)=\pm 2\gamma u^{p}\left(\pm l, t\right),
\end{array}
\right.
\]
where $\gamma$ is a constant. Let $u_0>0$ be a smooth function
defined on $\left[-l,l\right]$, and which 
satisfies the compatibility condition
$$\partial_x \log u_0\left(\pm l\right)= \pm 2\gamma u_0^{p-1}\left(\pm l\right).$$
We show that for $\gamma > 0$, $p\leq \frac{3}{2}$ 
solutions to the logarithmic diffusion equation above with initial data $u_0$
are global and blow-up in infinite time, and for $p>2$ there is finite time blow-up.
Also, we show that in the case of $\gamma<0$, $p\geq \frac{3}{2}$,
solutions to the logarithmic diffusion equation with initial data $u_0$
are global
and blow-down in infinite time,
but if $p\leq 1$ there is finite time blow-down. For
some of the cases mentioned above, and some particular
families of examples, we provide 
blow-up and blow-down rates. Our approach is partly based on studying
the Ricci flow on a cylinder endowed 
with a $\mathbb{S}^1$-symmetric metric. Then, we bring
our ideas full circle by proving a new long time
existence result for the Ricci flow on a cylinder without
any symmetry assumption. Finally, we show a blow-down
result for the logarithmic diffusion equation on a disc.
    
\end{abstract}
 
{\keywordsnames {Logarithmic diffusion; nonlinear Robin boundary conditions; Ricci flow on surfaces.}}

{\amsprimary {53C44; 35K55; 35K57; 58J35.}}

\section{Introduction}

\subsection{}
In this paper we are concerned with the study of the logarithmic diffusion equation with nonlinear 
Robin boundary condition, namely
\begin{equation}
\label{eq:boundaryvaluelogarithmic}
\left\{
\begin{array}{l}
\partial_t u= \partial_{xx} \log u\quad
\mbox{in}\quad \left[-l,l\right]\times \left(0, \infty\right)\\
\displaystyle
\partial_x u\left(\pm l, t\right)=\pm 2\gamma u^{p}\left(\pm l, t\right),
\end{array}
\right.
\end{equation}
with $\gamma$ a constant.
Here we have employed the following notation:
\[
\partial_x:=\frac{\partial}{\partial x},\quad \partial_{xx}:=\frac{\partial^2}{\partial x^2},
\quad \partial_t:=\frac{\partial}{\partial t}.
\]
The logarithmic diffusion equation
appears naturally in physics when studying the behaviour of
a thermalized electronic cloud (that is, its density 
function satisfies Maxwell's distribution) as is shown in
\cite{longman76}, and it 
can be interpreted as a limiting case of the family
of porous media equations (see \cite{vazquez07}). In Geometric Analysis, the logarithmic diffusion equation appears
in the study of the Ricci flow in surfaces, in particular in $\mathbb{R}^2$
(see for instance \cite{daskalopoulos04,daskalopoulos06}). Also,
the logarithmic diffusion equation with Dirichlet boundary conditions, 
and with a source have received some attention,
and in each of these instances have revealed some interesting behaviour
(see \cite{shimojo18} and the references therein).
Our aim is to start a more in depth study on the
behaviour of the logarithmic diffusion equation with Robin nonlinear
boundary conditions, 
and our motivation comes from its relation with the Ricci flow
on surfaces with boundary. 

Henceforth, we adopt the
following notation:
The expression $a\lesssim b$ means that there is a constant $C>0$ such that $a\leq Cb$.
$a\sim b$ means that there is a constant $C>0$ such that $C^{-1}a \leq b \leq Ca$.
For $u:\Omega\times \left[0,T\right)\longrightarrow \mathbb{R}$, define
\[
u_{\min}\left(t\right)=\min_{x\in \overline{\Omega}}u\left(x,t\right) \quad\mbox{and}\quad
u_{\max}\left(t\right)=\max_{x\in \overline{\Omega}}u\left(x,t\right).
\]

Let us then state the results to be
proved in this paper.
We will first prove, in Section \ref{proof:thm1}, the following theorem. 

\begin{theorem}
\label{thm:1Dlogarithmic}
Consider the one dimensional logarithmic diffusion equation
(\ref{eq:boundaryvaluelogarithmic}) with $p=\frac{3}{2}$.
Let $u_0>0$ be such that 
the compatibility condition
\begin{equation*} 
\partial_x \log u_0\left(\pm l\right)= \pm 2\gamma \sqrt{u_0\left(\pm l\right)}
\end{equation*}
 holds. Then, we have the following
(the implicit and explicit constants below may depend on the initial data, on $\gamma$
and on $l$):
\begin{enumerate}
\item[(i)] If $\partial_{xx}\log u_0>0$ and 
 $\gamma>0$ , solutions to the logarithmic diffusion
are global, blow-up in infinite 
time, and in fact $u_{\min}\left(t\right)\sim t$. Furthermore, 
$t^{\frac{3}{2}}\lesssim u_{\max}\left(t\right)\lesssim e^{Mt}$, and the blow-up
profile cannot be flat as $u_{\max}\left(t\right)/u_{\min}\left(t\right)\rightarrow \infty$.

\item[(ii)] If $\partial_{xx}\log u_0<0$ and $\gamma<0$ 
solutions to the logarithmic diffusion
are global, and blow-down in infinite time, that is $u_{\min}\left(t\right)\rightarrow 0$ as $t\rightarrow \infty$,
and in fact $u_{\min}\left(t\right)\lesssim 1/t$. The blow-down can be as fast 
as $e^{-Dt^2}$, for a constant $D>0$.
 
\end{enumerate}
\end{theorem}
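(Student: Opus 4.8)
The plan is to read equation (\ref{eq:boundaryvaluelogarithmic}) with $p=\tfrac32$ as the Ricci flow on a cylinder and push everything through maximum principles and a Gauss--Bonnet-type identity. Setting $w=\log u$, the PDE becomes $\partial_t w=e^{-w}\partial_{xx}w$ with $\partial_x w(\pm l,t)=\pm 2\gamma e^{w(\pm l,t)/2}$; equivalently the metric $g=u\,(dx^2+d\theta^2)$ on $[-l,l]\times\mathbb{S}^1$ evolves by $\partial_t g=-2Kg$ with Gauss curvature $K=-\frac{1}{2u}\partial_{xx}\log u$, and the boundary condition says exactly that the two boundary circles keep constant geodesic curvature $\pm\gamma$ — which is why $p=\tfrac32$ is distinguished. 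After recalling short-time existence of a smooth positive solution on a maximal interval $[0,T_{\max})$ (the compatibility hypothesis is used here), I would differentiate the boundary condition in $t$ and substitute the equation to find that $R:=2K=-e^{-w}\partial_{xx}w$ satisfies
\[
\partial_t R=e^{-w}\partial_{xx}R+R^2,\qquad \partial_x R(\pm l,t)=\pm\gamma\,e^{w(\pm l,t)/2}\,R(\pm l,t)
\]
i.e. $\partial_\nu R=\gamma R$ on the boundary, where $\partial_\nu$ is the outward unit normal; equivalently $Q:=-R=\partial_t\log u$ solves $\partial_t Q=e^{-w}\partial_{xx}Q-Q^2$, $\partial_\nu Q=\gamma Q$.

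For part (i), assume $\partial_{xx}\log u_0>0$ and $\gamma>0$, i.e. $R_0<0$. A maximum principle for $R$ gives $R<0$ for all $t$: at a boundary maximum of $R$ one would have $\partial_\nu R\ge0$ while $\gamma R<0$, which is impossible, so $R_{\max}(t)$ is attained in the interior and $\frac{d}{dt}R_{\max}\le R_{\max}^2$; substituting $-1/R_{\max}$ gives, for every $x$,
\[
-R(x,t)\ \ge\ -R_{\max}(t)\ \ge\ \frac{b}{1+bt},\qquad b:=-\max_x R_0>0 .
\]
In particular $\log u$ stays strictly convex, so $u$ has a unique interior minimum $x_*(t)$ and $u_{\max}(t)=\max\{u(l,t),u(-l,t)\}$. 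Since $\partial_t\log u=-R$ and $\partial_x u(x_*)=0$, differentiating $u_{\min}(t)=u(x_*(t),t)$ gives $\frac{d}{dt}\log u_{\min}=-R(x_*,t)$, so the previous display yields $u_{\min}(t)\gtrsim t$. For $u_{\max}$ I would use the area identity $\frac{d}{dt}\int_{-l}^l u\,dx=[\partial_x\log u]_{-l}^{l}=2\gamma\big(\sqrt{u(l,t)}+\sqrt{u(-l,t)}\big)\ge 4\gamma\sqrt{u_{\min}(t)}\gtrsim\sqrt t$, hence $\int_{-l}^l u\,dx\gtrsim t^{3/2}$; since $\partial_{xx}\log u>0$ forces $u=e^{w}$ to be convex in $x$, one also has $\int_{-l}^l u\,dx\le 2l\,u_{\max}(t)$, so $u_{\max}(t)\gtrsim t^{3/2}$. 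On the other hand, boundedness of $-R$ gives $\frac{d}{dt}\log u_{\max}=-R(x_{\max},t)\le C$, i.e. $u_{\max}(t)\lesssim e^{Ct}$; and $u_{\max}(t)/u_{\min}(t)\gtrsim t^{3/2}/t=\sqrt t\to\infty$.

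The genuinely hard input — the main obstacle — is the matching upper bound $-R(x_*,t)\lesssim 1/t$, equivalently $\partial_{xx}\log u(x_*,t)=-R(x_*,t)\,u_{\min}(t)\lesssim1$, which turns $\frac{d}{dt}u_{\min}=\partial_{xx}\log u(x_*,t)$ into $u_{\min}\lesssim t$ and also feeds global existence (together with a bound $-R\le C$). The difficulty is that the Robin condition $\partial_\nu Q=\gamma Q$ has, for $\gamma>0$, the wrong sign for a naive upper barrier for $Q=-R$ near the boundary; one has to work locally near the waist, where $\partial_x w$ is small, and run a localized maximum-principle argument on $Q$ that exploits the good sign of the $-Q^2$ term in $\partial_t Q=e^{-w}\partial_{xx}Q-Q^2$. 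Once the curvature is controlled, $T_{\max}=\infty$ follows from the curvature and geometry bounds by standard parabolic estimates, completing (i).

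Part (ii) is the mirror image: with $\partial_{xx}\log u_0<0$ and $\gamma<0$ one shows $R>0$ is preserved (now $R_{\min}$ is interior and $\frac{d}{dt}R_{\min}\ge R_{\min}^2$), $\log u$ stays concave, $u$ has an interior maximum, and $u_{\min}(t)=\min\{u(l,t),u(-l,t)\}$ sits at an endpoint; then $\frac{d}{dt}\log u(\pm l,t)=-R(\pm l,t)$ together with the mirror estimate $R(\pm l,t)\gtrsim 1/(1+t)$ gives $u_{\min}(t)\to0$ with $u_{\min}(t)\lesssim 1/t$ — the lower bound on $R$ at the boundary being again the delicate point. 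Finally, to realise blow-down as fast as $e^{-Dt^2}$ I would, for a suitably small initial datum, compare $u$ from above with the explicit super-solution $\bar u(x,t)=\exp\big(-Dt^2-\tfrac12 c(t)x^2\big)$, with $D>0$ small (depending on $\gamma,l$) and $c(t)>0$ a decaying profile chosen so that $\bar u$ is a super-solution of the interior equation and satisfies $\partial_\nu\bar u\ge 2\gamma\bar u^{3/2}$ on the boundary; then $u_{\min}(t)\le\bar u_{\min}(t)\le e^{-Dt^2}$, while a matching sub-solution bounded below by a positive function of $t$ precludes finite-time extinction and hence gives global existence.
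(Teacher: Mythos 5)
Your reduction to the Ricci flow and the elementary estimates in part (i) are sound and essentially match the paper's: $R<0$ is preserved, $R_{\max}$ is interior, the ODE comparison gives $-R\geq b/(1+bt)$ and hence $u_{\min}(t)\gtrsim t$, and your mass identity $\frac{d}{dt}\int u\,dx=2\gamma\bigl(\sqrt{u(l,t)}+\sqrt{u(-l,t)}\bigr)\gtrsim\sqrt{t}$ is in fact a cleaner route to $u_{\max}\gtrsim t^{3/2}$ than the paper's length/area computation. But the two facts you yourself flag as ``the genuinely hard input'' are precisely the ones you do not prove, and your proposed fixes would not close them. First, the uniform bound $-R\leq C$ (which is what gives global existence and $u_{\max}\lesssim e^{Mt}$) cannot come from a ``localized maximum-principle argument near the waist'': the obstruction lives at the boundary, where $\partial_\nu(-R)=\gamma(-R)>0$ is exactly the bad sign, and the favourable $-Q^2$ term does not help there. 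The paper's argument is of a different nature: assuming the curvature blows up, it invokes Gianniotis' compactness theorem to extract a blow-up limit, doubles the resulting half-plane solution to an eternal solution on $\mathbb{R}^2$ with nonpositive curvature, and contradicts the fact that eternal solutions have nonnegative curvature. Second, the matching bound $u_{\min}(t)\lesssim t$ is not obtained by a barrier either; the paper quotes the sharp Topping--Yin interior decay estimate. Without these two inputs you have neither global existence, nor $u_{\min}\sim t$, nor the divergence of $u_{\max}/u_{\min}$ (which uses $u_{\min}\lesssim t$).

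Part (ii) is not the mirror image you describe, and your sketch contains an outright error: for $R>0$ an interior minimum would satisfy $\frac{d}{dt}R_{\min}\geq R_{\min}^2$, which forces $R_{\min}$ to blow up in finite time --- incompatible with the global existence you are asserting. For $\gamma<0$ and $R>0$ it is the \emph{maximum} of $R$ that is pushed into the interior; the minimum can sit on the boundary, and no pointwise bound $R(\pm l,t)\gtrsim 1/(1+t)$ follows from these considerations. The paper instead imports the positive-curvature results of \cite{cortissoz16}: global existence, and the decay $\int_{\mathcal{M}}R_g\,dA_g\lesssim 1/t$, which via Gauss--Bonnet and Lemma \ref{lemma:areatolength} gives $A_g(t)\lesssim 1/t$ and hence $u_{\min}\lesssim 1/t$. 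Likewise the $e^{-Dt^2}$ rate is obtained there not by a supersolution but from explicit examples for which $R_{\max}(t)\gtrsim t$, integrated through (\ref{eq:conformalfromcurvature}); your Gaussian supersolution ansatz could plausibly yield the upper bound $u_{\min}\lesssim e^{-Dt^2}$ for small data, but it presupposes global existence of the solution being compared, which your proposal never establishes.
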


Classical solutions to (\ref{eq:boundaryvaluelogarithmic}),
given positive initial data, exist at least for a short time $T>0$, and the compatibility
condition guarantees that it is at least $C^2$ on $\left[-l,l\right]\times\left[0,T\right)$.
The exponent $p=\frac{3}{2}$ in the boundary non linearity can be relaxed a bit. Indeed, using 
Theorem \ref{thm:1Dlogarithmic} and a comparison principle for the logarithmic diffusion equation,  
we will then prove:
\begin{theorem}
\label{thm:1DlogarithmicB}
Let $-\infty < p\leq \frac{3}{2}$, $\gamma>0$, and let $u_0>0$ be such that 
the compatibility condition
$$\partial_x \log u_0\left(\pm l\right)= \pm 2\gamma u_0^{p-1}\left(\pm l\right)$$
holds.
Then, the solution to (\ref{eq:boundaryvaluelogarithmic})
with initial data $u_0$
is global and blows-up in infinite time.
Moreover, if at some point in time $u$ satisfies
$\partial_{xx}\log u > 0$, we have the following blow-up rates:
\begin{itemize}
\item[1.] If $p=2$ then
\[
u_{\max}\left(t\right)\gtrsim e^{\frac{\gamma}{l}t}.
\] 

\item[2.] If $p<2$ then
\[
u_{\max}\left(t\right)\gtrsim t^{\frac{1}{2-p}}.
\]

\end{itemize}
In any case, for $p\leq \frac{3}{2}$ the blow-up can be no faster than
$e^{Mt}$ (that is, any blow-up rate for solutions to
(\ref{eq:boundaryvaluelogarithmic}) with $\gamma>0$ must be $\lesssim e^{Mt}$) where $M>0$ is a constant that only depends on $l$ and $\gamma$.

\end{theorem}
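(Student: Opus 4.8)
The plan is to run everything off a single monotonicity fact together with the evolution of the mass $m(t):=\int_{-l}^{l}u(x,t)\,dx$, using Theorem~\ref{thm:1Dlogarithmic} only as a comparison object for the upper bound. First I would record that, since $\gamma>0$, the boundary relation $\partial_x u(\pm l,t)=\pm2\gamma u^p(\pm l,t)$ makes $u$ increasing toward each endpoint, so $u_{\min}(t)$ is always attained at an interior point $x^*$; there $\partial_x u=0$ and $\partial_{xx}u\ge 0$, whence $\partial_t u(x^*)=\partial_{xx}\log u(x^*)=\partial_{xx}u(x^*)/u(x^*)\ge 0$, and the standard lemma for differentiating a minimum gives that $u_{\min}$ is non-decreasing; in particular $u\ge c_0:=u_{\min}(0)>0$ as long as the solution exists. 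The same computation shows that whenever $\partial_{xx}\log u(\cdot,t)>0$ the profile $\log u(\cdot,t)$ is strictly convex, so $u_{\max}(t)=\max\{u(l,t),u(-l,t)\}$.

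For global existence and the bound $u_{\max}\lesssim e^{Mt}$ I would argue by comparison. Since $u\ge c_0$ and $p\le\tfrac32$, on the boundary $u^{p}=u^{3/2}u^{\,p-3/2}\le c_0^{\,p-3/2}u^{3/2}$, so $u$ is a subsolution for the $p=\tfrac32$ problem with constant $\tilde\gamma:=\gamma c_0^{\,p-3/2}>0$. Taking $\bar u_0\ge u_0$ with $\partial_{xx}\log\bar u_0>0$ and the matching compatibility condition, and letting $\bar u$ be the solution of the $(\tilde\gamma,\tfrac32,\bar u_0)$ problem supplied by Theorem~\ref{thm:1Dlogarithmic}(i), the comparison principle for (\ref{eq:boundaryvaluelogarithmic}) yields $u\le\bar u$; since $\bar u$ is global with $\bar u_{\max}\lesssim e^{Mt}$, so is $u$. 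When $p=\tfrac32$ one compares instead with a supersolution from Theorem~\ref{thm:1Dlogarithmic}(i) directly; when $p<\tfrac32$ one can also bypass Theorem~\ref{thm:1Dlogarithmic} entirely with an explicit barrier $\bar u(x,t)=A(t)/\cos(\kappa(t)x)$ on a slightly enlarged interval, with $\kappa(t)$ increasing toward the singularity, which for $p<\tfrac32$ admits a polynomial $A(t)$ and therefore gives $u_{\max}\lesssim t^{\beta}$; in particular the exponential rate is controlled by a constant depending only on $l$ and $\gamma$. To see that the blow-up is in \emph{infinite} time, integrate the equation: $m'(t)=2\gamma\big(u^{p-1}(l,t)+u^{p-1}(-l,t)\big)>0$, and if $u$ stayed bounded, $u\le C$, then together with $u\ge c_0$ we get $u^{p-1}(\pm l,t)\ge c_1:=\min\{c_0^{\,p-1},C^{\,p-1}\}>0$, so $m'\ge 4\gamma c_1$ and $m(t)\to\infty$, contradicting $m(t)\le 2lC$; hence $u_{\max}(t)\to\infty$.

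For the blow-up rates, assume $\partial_{xx}\log u(\cdot,t_1)>0$ at some time $t_1$. The first step is that this is preserved for $t\ge t_1$: writing $w:=\partial_{xx}\log u$ and differentiating the equation twice in $x$ gives $w_t=u^{-1}w_{xx}-2u^{-1}(\log u)_x w_x+u^{-1}\big((\log u)_x^2-w\big)w$, while differentiating the boundary relation in $t$ forces $\partial_\nu w=2\gamma p\,u^{p-1}\,w$ on the boundary ($\nu$ the outward normal), a factor that is positive there. If $w$ first vanished at a later time at an endpoint, the boundary identity would give $\partial_\nu w=0$ there, contradicting the Hopf boundary-point lemma (the zeroth-order term vanishes where $w=0$, so the lemma applies); an interior first zero is excluded by the strong maximum principle. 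Thus $\log u(\cdot,t)$ is strictly convex for $t\ge t_1$, so $u_{\max}(t)=\max\{u(l,t),u(-l,t)\}$, and therefore $u^{p-1}(l,t)+u^{p-1}(-l,t)\ge u_{\max}(t)^{p-1}$ in every case — for $p\ge1$ because the larger boundary value is $u_{\max}$, for $p<1$ because then the larger value carries the smaller power. Combining $m'(t)\ge 2\gamma\,u_{\max}(t)^{p-1}$ with the trivial $u_{\max}(t)\ge m(t)/(2l)$ gives the autonomous inequality $m'(t)\ge 2\gamma(2l)^{1-p}m(t)^{p-1}$ for $t\ge t_1$; for $p<2$ this integrates to $m(t)^{2-p}\gtrsim t$, hence $u_{\max}(t)\ge m(t)/(2l)\gtrsim t^{1/(2-p)}$, and for $p=2$ it reads $m'\ge(\gamma/l)m$, hence $m(t)\gtrsim e^{\gamma t/l}$ and $u_{\max}(t)\gtrsim e^{\gamma t/l}$.

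The main obstacle I expect is the preservation of $\partial_{xx}\log u>0$: its evolution equation carries a zeroth-order coefficient $u^{-1}((\log u)_x^2-w)$ of indefinite sign, so the argument must go through the exact Robin condition $\partial_\nu w=2\gamma p\,u^{p-1}w$ at the boundary together with a Hopf-lemma argument rather than a bare maximum principle, and one must check that the same boundary identity does not manufacture a minimum of $w$ dipping below zero at an endpoint. A secondary point is verifying that the exponential constant $M$ in the upper bound can be taken to depend only on $l$ and $\gamma$; for $p=\tfrac32$ this is precisely what the analysis behind Theorem~\ref{thm:1Dlogarithmic} provides, and for $p<\tfrac32$ it follows from the explicit barrier, whose growth is in fact polynomial.
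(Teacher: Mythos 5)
Your architecture is essentially the paper's: comparison with a $p=\frac{3}{2}$ solution controlled by Theorem \ref{thm:1Dlogarithmic}(i) for global existence and the $e^{Mt}$ ceiling, propagation of $\partial_{xx}\log u>0$ via a Hopf-lemma argument at the Robin boundary (your identity $\partial_\nu w=2\gamma p\,u^{p-1}w$ for $w=\partial_{xx}\log u$ is equivalent to the paper's $\partial_x R(\pm l)=\pm2\gamma(p-1)u^{p-1}R$ for $R=-u_t/u$, i.e.\ Lemma \ref{lemma:preservingsign}), and an ODE for an integral of $u$ for the rates. Two of your steps are actually cleaner than the paper's: your proof that blow-up must occur (if $c_0\le u\le C$ then $m'=2\gamma(u^{p-1}(l,t)+u^{p-1}(-l,t))$ is bounded below by a positive constant, contradicting $m\le 2lC$) replaces the paper's appeal to a Lyapunov functional plus nonexistence of equilibria; and your rate computation with $m=\int u$ is an exact identity, whereas the paper's analogue with $r_n=\int u^n$ drops the nonpositive integration-by-parts remainder $-n(n-1)\int u^{n-3}u_x^2$.

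There are, however, concrete gaps. First, the rate argument fails for $p<1$: from $m'\ge 2\gamma u_{\max}^{p-1}$ you pass to $m'\ge 2\gamma(2l)^{1-p}m^{p-1}$ by raising $u_{\max}\ge m/(2l)$ to the power $p-1$, and for $p<1$ that exponent is negative, so the inequality reverses. The paper's device of letting $n\to\infty$ in $\int u^n$ exists precisely to make the relevant exponent $n+p-2$ positive for every $p<2$; alternatively, for $p<1$ your own lower bound $u\ge c_0$ plus convexity give $|\partial_x\log u|\le 2\gamma c_0^{p-1}$ everywhere, hence $u_{\max}\le e^{4\gamma l c_0^{p-1}}u_{\min}\lesssim m$, which closes your ODE. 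Second, your substitution $u^{p}\le c_0^{p-3/2}u^{3/2}$ replaces $\gamma$ by $\tilde\gamma=\gamma c_0^{p-3/2}$, which depends on $u_0$, so the $M$ you obtain depends on the initial data, contrary to the claim $M=M(l,\gamma)$; the paper keeps $\gamma$ fixed by comparing with a $p=\frac{3}{2}$ solution $w$ run forward in time until $w\ge\max\{1,\max u_0\}$ (possible since $w\gtrsim t$), so that $2\gamma w^{3/2}=2\gamma w^{\delta}w^{3/2-\delta}\ge 2\gamma w^{3/2-\delta}$. This same time-shift also supplies something you assert without justification: a comparison datum $\bar u_0>u_0$ satisfying the compatibility condition for a \emph{prescribed} $\gamma$ cannot be obtained by scaling (scaling alters $\gamma$ in the compatibility condition), and simple large families do not satisfy the constraint, so one really does need to manufacture it by evolving a fixed solution. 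Your explicit barrier $A(t)/\cos(\kappa(t)x)$ for $p<\frac{3}{2}$ is unverified and becomes unnecessary once the time-shifted comparison is in place.
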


\begin{theorem}
\label{thm:1Dlogarithmicp2}
Let $p\geq \frac{3}{2}$, $\gamma<0$, and let $u_0>0$ be such that 
the compatibility condition
$$\partial_x \log u_0\left(\pm l\right)= \pm 2\gamma u_0^{p-1}\left(\pm l\right)$$
holds.
Then, the solution to
(\ref{eq:boundaryvaluelogarithmic})
with initial data $u_0$
is global and blows-down in infinite time. 
Furthermore, if at some point in time $u$ satisfies
$\partial_{xx}\log u < 0$, we have the following blow-down rates:
\begin{itemize}
\item[1.] If $p=2$ then
\[
u_{\min}\left(t\right)\lesssim e^{\frac{\gamma}{l}t}.
\] 

\item[2.] If $p>2$ then
\[
u_{\min}\left(t\right)\lesssim \frac{1}{t^{\frac{1}{p-2}}}.
\]
\end{itemize}
In any case, for $p\geq \frac{3}{2}$ the blow-down can be no faster than $e^{-Dt^2}$
(that is, any blow-down rate for solutions to
(\ref{eq:boundaryvaluelogarithmic}) with $p\geq \frac{3}{2}$ and $\gamma<0$ must be $\gtrsim e^{-Dt^2}$)
where $D>0$ is a constant that only depends on $l$ and $\gamma$.

\end{theorem}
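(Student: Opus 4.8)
The plan is to treat Theorem~\ref{thm:1Dlogarithmicp2} as the mirror image of Theorem~\ref{thm:1DlogarithmicB}, proving it in three stages: global existence, blow-down in infinite time, and the rate estimates. For global existence the key a priori bound is $u_{\max}(t)\le u_{\max}(0)$. Indeed, since $\gamma<0$ the boundary condition $\partial_x u(l,t)=2\gamma u^{p}(l,t)<0$ forces $u(\cdot,t)$ to be decreasing as $x\to l$, and likewise increasing as $x\to-l$, so the spatial maximum is always attained at an interior point, where $\partial_t u=\partial_{xx}\log u=\partial_{xx}u/u\le 0$; the maximum principle gives the bound. To rule out extinction in finite time I would construct a lower barrier: let $\phi$ solve the $p=\tfrac32$ problem with coefficient $\gamma'=\gamma\,u_{\max}(0)^{\,p-3/2}<0$ and smooth initial datum $\phi_0\le u_0$ with $\partial_{xx}\log\phi_0<0$ satisfying the corresponding compatibility condition (for instance a sufficiently small member of the family $\phi_0=\varepsilon/\cosh^{2}(\alpha x)$, with $\varepsilon,\alpha$ tuned to $\gamma'$). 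Since $u\le u_{\max}(0)$ and $p\ge\tfrac32$, one has $u^{p-1}(\pm l,t)\le u_{\max}(0)^{\,p-3/2}\,u^{1/2}(\pm l,t)$, which says exactly that $u$ is a supersolution of the $(\gamma',\tfrac32)$-problem; the comparison principle for the logarithmic diffusion equation with Robin data (with the usual $\varepsilon e^{Kt}$-perturbation to make a boundary inequality strict) then gives $u\ge\phi$. By Theorem~\ref{thm:1Dlogarithmic}(ii), $\phi$ is a global positive classical solution, so $u_{\min}(t)\ge\phi_{\min}(t)>0$ on every finite interval; together with $u\le u_{\max}(0)$ this yields global existence.

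For blow-down in infinite time, integrate the equation over $[-l,l]$: writing $A(t)=\int_{-l}^{l}u\,dx$ one gets $A'(t)=2\gamma\bigl(u^{p-1}(l,t)+u^{p-1}(-l,t)\bigr)<0$, so $A$ decreases to a limit $L\ge 0$, and integrating in time shows $\int_{0}^{\infty}\bigl(u^{p-1}(l,t)+u^{p-1}(-l,t)\bigr)\,dt<\infty$, whence $\liminf_{t\to\infty}u_{\min}(t)=0$. To upgrade this to $u_{\min}(t)\to 0$ and to $L=0$ I would use that the logarithmic diffusion equation has no positive steady state under these boundary conditions (a steady state has $\log u$ affine, $\log u=ax+b$, and the Robin conditions at $\pm l$ then force $a$ to be simultaneously negative and positive), together with the uniform bound $u\le u_{\max}(0)$ and parabolic compactness on time translates; since $A$ is monotone, $u_{\min}(t)\le A(t)/(2l)\to 0$, and the lower barrier $\phi$ again prevents this at a finite time.

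For the rates, assume $\partial_{xx}\log u(\cdot,t_0)<0$ for some $t_0$, which after translation I take to be $0$. The first step is preservation of this condition, equivalently positivity of the scalar curvature of the $\mathbb S^1$-symmetric metric $u(dx^2+d\theta^2)$: writing the evolution of $q=\partial_{xx}\log u$, it satisfies a parabolic equation, and one must check that for $\gamma<0$, $p\ge\tfrac32$ the Robin condition contributes a boundary term of the favorable (Hopf-type) sign, so that the maximum principle keeps $q<0$ — this is the Ricci-flow-flavored step. Granting this, $\log u$ is concave, so $u_{\min}(t)=\min\{u(l,t),u(-l,t)\}$, and comparing $\log u$ with its tangent line at the relevant endpoint (where $(\log u)'(\pm l,t)=\pm 2\gamma u^{p-1}(\pm l,t)$) yields the flattening estimate $\log\bigl(u_{\max}(t)/u_{\min}(t)\bigr)\le 4|\gamma|\,l\,u_{\min}(t)^{p-1}$. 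Since $u_{\min}(t)\to 0$, this forces $u(\pm l,t)\ge(1-\varepsilon)u_{\max}(t)\ge\tfrac{1-\varepsilon}{2l}A(t)$ for $t$ large; feeding this into $A'(t)=2\gamma\bigl(u^{p-1}(l,t)+u^{p-1}(-l,t)\bigr)$ gives, for $t$ large, $A'(t)\le-cA(t)^{p-1}$ with $c$ depending only on $\gamma,l$ (and, for $p=2$, taking $\varepsilon\le\tfrac12$ already gives $A'(t)\le\tfrac{\gamma}{l}A(t)$). Integrating and using $u_{\min}\le A/(2l)$ gives $u_{\min}(t)\lesssim e^{\gamma t/l}$ for $p=2$ and $u_{\min}(t)\lesssim t^{-1/(p-2)}$ for $p>2$; that this argument would instead force finite-time extinction when $p<2$ is consistent with no rate being claimed there (and indicates $q<0$ is not preserved for $p<2$). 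One can alternatively obtain the upper bounds by sandwiching $u$ from above by explicit or semi-explicit solutions of the same problem with larger initial data, as in the families of examples.

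Finally, for the sharp lower bound $u_{\min}(t)\gtrsim e^{-Dt^{2}}$, I would prove a differential inequality $\tfrac{d}{dt}\log u_{\min}(t)\ge -D(1+t)$, whose integration gives $\log u_{\min}(t)\ge\log u_{\min}(0)-D(t+t^{2}/2)$. Wherever the minimum sits at the boundary, $\tfrac{d}{dt}u_{\min}(t)=\partial_{xx}\log u(\pm l,t)$, so this reduces to a lower bound on $\partial_{xx}\log u$ at $\pm l$; the $(1+t)$ factor reflects that the only quantity capable of making $\partial_{xx}\log u$ large and negative at the boundary grows at most linearly in $t$, which one extracts from $u\le u_{\max}(0)$ together with the boundary identity $\partial_{xxx}\log u(l,t)=2\gamma p\,u^{p-1}(l,t)\,\partial_{xx}\log u(l,t)$ (obtained by differentiating the Robin condition in $t$ and using the equation for $\partial_x\log u$), and its analogue at $-l$. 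I expect the two genuinely hard points to be the preservation of $\partial_{xx}\log u<0$ with the correct sign of the boundary term, and obtaining the two-sided control of $\partial_{xx}\log u$ at $x=\pm l$ that underlies both the sharp $e^{-Dt^{2}}$ lower bound and the upper rates with their stated exponents and constants; the rest is comparison-principle bookkeeping of the type already used for Theorem~\ref{thm:1DlogarithmicB}.
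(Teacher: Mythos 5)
Your global-existence step is essentially the paper's: compare $u$ with a solution of the $p=\tfrac32$ problem via the comparison principle, absorbing the exponent gap with an a priori bound (you rescale $\gamma$ to $\gamma'=\gamma\,u_{\max}(0)^{p-3/2}$ using $u\le u_{\max}(0)$; the paper instead waits until its barrier $w$ satisfies $w\le 1$ and writes $2\gamma w^{3/2}=2\gamma w^{-\delta}w^{3/2+\delta}\le 2\gamma w^{3/2+\delta}$). Where you genuinely diverge is in the last claim of the theorem: the paper obtains ``blow-down no faster than $e^{-Dt^2}$'' \emph{from the same comparison}, by choosing the lower barrier among the explicit examples of Proposition \ref{thm:examplesuniformblowdown}, whose decay rate is $e^{-Dt^2}$. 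Your proposed route --- a differential inequality $\frac{d}{dt}\log u_{\min}\ge -D(1+t)$ resting on a linear-in-$t$ bound for $-\partial_{xx}\log u$ at the boundary --- is a genuine gap: the identity $\partial_{xxx}\log u(\pm l,t)=2\gamma p\,u^{p-1}\partial_{xx}\log u(\pm l,t)$ is correct but does not by itself control the size of $\partial_{xx}\log u$ there; in Ricci-flow language you are asserting $R_{\max}(t)\lesssim 1+t$, which is a nontrivial Harnack-type estimate you have not supplied. You already hold the correct tool: take your barrier $\phi$ to be a (rescaled) Proposition \ref{thm:examplesuniformblowdown} example and read off $u\ge\phi\gtrsim e^{-Dt^2}$.

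For the rates at $p=2$ and $p>2$ your argument (concavity of $\log u$ $\Rightarrow$ tangent-line flattening $\log(u_{\max}/u_{\min})\le 4|\gamma|l\,u_{\min}^{p-1}\to 0$ $\Rightarrow$ the mass $A(t)=\int u$ obeys $A'\le -cA^{p-1}$) is a genuinely different route from the paper, which instead runs a differential inequality for the moments $q_n(t)=\int u^{-n}\,dx$ and lets $n\to\infty$; the paper's method needs only that the minimum of $u$ sits at an endpoint, not the two-sided comparability of $u_{\max}$ and $u_{\min}$. Your computation does yield the stated exponents for $p\ge 2$. But you should not dismiss what the same computation says for $\tfrac32\le p<2$: there it gives $A'\le -cA^{p-1}$ with $p-1<1$, hence extinction of the mass in \emph{finite} time, which contradicts the global existence you have just established (and which the theorem asserts). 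Your parenthetical explanation --- that $\partial_{xx}\log u<0$ is not preserved for $p<2$ --- is refuted by Lemma \ref{lemma:preservingsign}, which holds for arbitrary $p$; and the other two inputs to the flattening step ($u_{\min}(t)\to 0$ and the Robin condition $\partial_x\log u(\pm l)=\pm 2\gamma u^{p-1}(\pm l)$) are likewise part of the theorem's conclusions. As written, your rate argument therefore proves more than the theorem allows and signals an unresolved inconsistency in the range $\tfrac32\le p<2$ for log-concave data; you must either locate an error in the flattening step or confront the consequence directly, rather than attributing it to a failure of sign preservation that the paper explicitly rules out.
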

A proof of Theorems 
\ref{thm:1DlogarithmicB} and \ref{thm:1Dlogarithmicp2}
 will be presented in Section \ref{proof:thm23}.
 
 \bigskip
Theorems \ref{thm:1DlogarithmicB} and \ref{thm:1Dlogarithmicp2} reveal some interesting behavior
of the
logarithmic diffusion equation. For instance, notice that 
when $p=1$ and $\gamma<0$, classical solutions to the logarithmic diffusion equation cannot be global.
Indeed, we can compute
\[
\frac{d}{dt}\int u\left(x,t\right)\,dx = 4\gamma,
\]
which shows that, as $\gamma<0$, blow-down occurs in finite time, and
in fact, as we will show later in Section \ref{blowfinite}, there is finite time blow down for certain initial data
when $\gamma<0$ and $p<1$. Then 
a natural question, which we were unable to resolve,
arises: for $\gamma<0$ are solutions global for $p>1$? or else what is the value of $p_0>1$ such that
for $p>p_0$ (or $p\geq p_0$) solutions are global, and 
for $p\leq p_0$ (resp. $p<p_0$) blow-down occurs in finite time? 
The exponent $p_0$ can be thought of as a critical exponent, and it is called 
a Fujita exponent (related to this, see the interesting work \cite{galaktionov96}).
Something similar happens when $\gamma>0$, though
instead of blow-down we have blow-up: in Section
\ref{blowfinite}, we will also
show that for certain initial data if $p>2$ and $\gamma>0$, there is blow-up in finite time.

\medskip

%Regarding (2) in Theorem \ref{thm:1Dlogarithmic}, we have examples
%that show that the blow-down rate can be quite fast; indeed we shall show that for some
%examples the blow down rate is $e^{-ct^2}$.

We shall base our proof of Theorem \ref{thm:1Dlogarithmic}
 on the behaviour of the unnormalised Ricci flow on a cylinder. To be
more precise, we shall deduce Theorem \ref{thm:1Dlogarithmic}
 from the behavior of the following
boundary value problem for a time evolving metric $g\left(t\right)$
on the cylinder $\mathcal{M}=\left[-l,l\right]\times \mathbb{S}^1$:

\begin{equation}
\label{Ricciunnormalised}
\left\{
\begin{array}{l}
\displaystyle\frac{\partial g}{\partial t}=-R_g g \quad \mbox{in}\quad \mathcal{M}\times\left(0,T\right),\\
k_{g}=\gamma \quad \mbox{on}\quad \partial \mathcal{M}\times\left(0,T\right),\\
g\left(\cdot,t\right)=g_0\left(\cdot\right) \quad \mbox{in}\quad \mathcal{M},
\end{array}
\right.
\end{equation}
where $R_g$ and $k_g$ denote the scalar curvature of $\mathcal{M}$ and the
geodesic curvature of 
$\partial \mathcal{M}=\left[\left\{-l\right\}\cup\left\{l\right\}\right]\times
\mathbb{S}^1$ 
with respect to the metric $g$, respectively;
here, $\gamma=k_{g_0}$ is the geodesic curvature of $\partial \mathcal{M}$
with respect to the initial metric. For our results it will suffice 
to consider initial data of the form $g_0=dx^2+f^2\left(x\right)d\theta^2$;
when a cylinder is endowed with a metric of this form, we will say that 
it
is $\mathbb{S}^1$-symmetric or that it has $\mathbb{S}^1$-symmetry.

It might be interesting to observe the 
following. Theorem \ref{thm:1Dlogarithmic},
which is proved using the Ricci flow, is used to prove Theorems
\ref{thm:1DlogarithmicB} and \ref{thm:1Dlogarithmicp2}, and
then we use these theorems as a tool to prove a new longtime existence result
for the Ricci flow on a cylinder, so in some
sense our ideas come full circle. Namely, in Section
\ref{sect:riccilongtime} we shall show:

\begin{theorem}
\label{thm:longtimeexistencericciflow}
Let 
$g_0$ be an arbitrary smooth metric on the cylinder
$\mathcal{M}=\left[-1,1\right]\times \mathbb{S}^1$. Let 
$k_{g_0}\left(\cdot\right)$ be the
geodesic curvature of the boundary with
respect to $g_0$. Let
\[
\varphi: \partial \mathcal{M}\times \left[0,\infty\right)
\longrightarrow \mathbb{R}
\]
be a smooth function such that
$\varphi\left(\cdot,0\right)=k_{g_0}\left(\cdot\right)$, and
which is bounded on any finite
time interval.
Then, the solution
to the unnormalised
Ricci flow 
\begin{equation}
\label{Ricciunnormalised2}
\left\{
\begin{array}{l}
\displaystyle\frac{\partial g}{\partial t}=-R_g g \quad \mbox{in}\quad \mathcal{M}\times\left(0,T\right),\\
k_{g}=\varphi\left(\cdot, t\right) \quad \mbox{on}\quad \partial \mathcal{M}\times\left(0,T\right),\\
g\left(\cdot,t\right)=g_0\left(\cdot\right) \quad \mbox{in}\quad \mathcal{M},
\end{array}
\right.
\end{equation}
 with initial
data $g_0$ exists for all time. 
\end{theorem}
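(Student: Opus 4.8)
The plan is to reduce the long-time existence of \eqref{Ricciunnormalised2} for an arbitrary (non-symmetric) initial metric $g_0$ to a statement about a scalar PDE of logarithmic-diffusion type, and then to invoke the a priori bounds obtained in Theorems \ref{thm:1DlogarithmicB} and \ref{thm:1Dlogarithmicp2}. The first step is to fix the gauge: on a cylinder, any metric evolving under the unnormalised Ricci flow $\partial_t g=-R_g g$ can be written, after pulling back by a time-dependent diffeomorphism (Hamilton--DeTurck trick / isothermal coordinates), in the conformal form $g=e^{u}\,g_{\mathrm{cyl}}$, where $g_{\mathrm{cyl}}=dx^2+d\theta^2$ is the flat cylinder metric on $[-1,1]\times\mathbb S^1$. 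Under this ansatz the Ricci flow becomes the scalar equation $\partial_t u=e^{-u}\Delta u$ on the cylinder, with $\Delta$ the flat Laplacian; this is the two-dimensional logarithmic diffusion equation, $\partial_t v=\Delta\log v$ with $v=e^{u}$. The boundary condition $k_g=\varphi(\cdot,t)$ translates, via the standard formula $k_g=e^{-u/2}\bigl(k_{\mathrm{cyl}}+\tfrac12\partial_\nu u\bigr)=\tfrac12 e^{-u/2}\partial_\nu u$ (since the flat boundary is geodesic), into the nonlinear Robin condition $\partial_\nu u=2\varphi\, e^{u/2}$ on $\partial\mathcal M\times(0,T)$, i.e. exactly the boundary nonlinearity of \eqref{eq:boundaryvaluelogarithmic} with $p=\tfrac32$ in the variable $v=e^u$, except that the coefficient $\gamma$ is now the time-dependent, locally bounded function $\varphi$.

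The second step is the heart of the matter: establishing uniform-on-compact-time-intervals bounds for $v=e^u$, both from above and from below, which by parabolic regularity theory (Schauder estimates, bootstrapping in the conformal factor) prevent the metric from becoming degenerate or blowing up in finite time, and hence give long-time existence. For the \emph{lower} bound on $v$ one argues as in the proof of Theorem \ref{thm:1DlogarithmicB}: the key observation there is that when $\gamma>0$ the solution is global with $u_{\min}(t)\sim t$ growing, and more generally a barrier/comparison argument using a spatially constant (or affine) supersolution controls how fast $v$ can decay. Since $\varphi$ is bounded on $[0,T]$, we may bound it above and below by constants $\gamma_\pm$ and sandwich $v$ between solutions of the constant-$\gamma$ problems; Theorems \ref{thm:1DlogarithmicB} and \ref{thm:1Dlogarithmicp2} then guarantee that neither a finite-time blow-up nor a finite-time blow-down can occur — the worst that happens as $t\to\infty$ is the exponential/Gaussian behaviour recorded there, which is harmless on any finite interval. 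For the \emph{upper} bound one uses the comparison principle for the logarithmic diffusion equation (already invoked to deduce Theorem \ref{thm:1DlogarithmicB} from Theorem \ref{thm:1Dlogarithmic}) together with the $e^{Mt}$ upper barrier; crucially the constant $M$ depends only on $l$ and on $\sup_{[0,T]}|\varphi|$, so $v_{\max}(t)\lesssim e^{Mt}<\infty$ on $[0,T]$.

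The third step is to upgrade these $C^0$ bounds on $v$ to the higher-order estimates needed to continue the flow. Once $0<c(T)\le v\le C(T)$ on $\mathcal M\times[0,T)$, the equation $\partial_t v=\Delta\log v$ is uniformly parabolic with smooth coefficients, and the compatibility of $\varphi(\cdot,0)$ with $g_0$ feeds the parabolic boundary Schauder theory; one obtains uniform $C^{2+\alpha,1+\alpha/2}$ (and then, by bootstrapping, $C^\infty$) bounds on $v$, hence on the metric $g$ and all its derivatives, on $\mathcal M\times[0,T]$. A standard open–closed (continuation) argument then shows the maximal existence time is $+\infty$: if it were some finite $T^*$, the uniform bounds just described would let us extend the solution past $T^*$, a contradiction. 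Finally one undoes the DeTurck gauge, noting that the generating vector field stays bounded on finite intervals (because $u$ and its spatial derivatives do), so the original Ricci flow solution also exists for all time.

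The main obstacle I expect is the passage from the scalar comparison estimates of Theorems \ref{thm:1DlogarithmicB}--\ref{thm:1Dlogarithmicp2} — which are stated for a \emph{constant} boundary coefficient $\gamma$ — to the \emph{time-dependent} coefficient $\varphi(\cdot,t)$ appearing here, and in particular making the comparison principle work with a Robin condition whose nonlinearity has a variable, possibly sign-changing coefficient. One has to check that the relevant sub/supersolutions (spatially affine functions with a suitably chosen time-dependent slope, or the explicit barriers built in the earlier proofs) remain ordered with the solution throughout $[0,T]$ despite $\varphi$ moving; this is where the hypothesis that $\varphi$ is bounded on finite time intervals is used in an essential way, and it is the step whose details require the most care. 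Everything else — the conformal reduction, the parabolic bootstrap, the continuation argument, and the removal of the DeTurck gauge — is by now routine in the Ricci-flow-on-surfaces literature.
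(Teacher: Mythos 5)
Your proposal follows essentially the same route as the paper: write $g=u\,g_E$ with $g_E$ flat and with totally geodesic boundary, so the flow becomes the two-dimensional logarithmic diffusion equation with boundary exponent $p=\tfrac32$ and coefficient $\varphi$, then bound $\varphi$ above and below by constants over $[0,T]$ and use $\theta$-independent barriers supplied by the one-dimensional Theorems \ref{thm:1DlogarithmicB} and \ref{thm:1Dlogarithmicp2} together with the comparison principle to get two-sided $C^0$ bounds on the conformal factor on any finite interval --- and the ``main obstacle'' you flag is resolved exactly as you suggest, since the comparison principle only needs the pointwise boundary inequalities $2\gamma_+ w^{3/2}\ge 2\varphi w^{3/2}$ and $2\gamma_- z^{3/2}\le 2\varphi z^{3/2}$ for positive barriers $w,z$. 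The only cosmetic difference is that no DeTurck gauge is needed: in two dimensions $-R_g g$ is already a conformal deformation, so the ansatz $g=u\,g_E$ is preserved without pulling back by diffeomorphisms, and there is no gauge to undo at the end.
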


There are very few results on the
 behavior of the Ricci flow with non homogenous boundary conditions
even in the case of surfaces.
The reader might be interested in consulting the works \cite{brendle02, cortissoz16, cortissoz20}. 
We would also like to point out that the study of Ricci flow, and other
geometric flows, on surfaces with boundary might
have some interesting applications on the dynamics of plant growth \cite{almosleh, pulwicki}. 

\subsection{}
We have tried, without
much success, to extend our 
results to higher dimensions. In
any case, in Section \ref{sect:diffusionondisc}, we have included a small incursion into the realm
of the logarithmic diffusion equation in dimension
2. To be more precise we study the behaviour of the logarithmic diffusion
equation
\begin{equation*}
%\label{eq:logdiffusiondisc}
\left\{
    \begin{array}{l}
    \partial_t u =\Delta \log u \quad \mbox{in}\quad D_{a}\times\left(0,T\right),\\
    \dfrac{\partial u}{\partial \eta}=2\gamma u^{p} \quad
    \mbox{on} \quad \partial D_a\times\left(0,T\right),
    \end{array}
\right.
\end{equation*}
where $D_a\subset \mathbb{R}^2$ is the open disc of radius $a$ centered at the origin.
We will show that for $\gamma\leq -\frac{1}{2a}$ and $p\leq 1$, $u$ blows-down in finite time.
This last section is based on the work done in \cite{brendle02, cortissoz20} in the context of the
Ricci flow.

To complete the plan
of the paper, in 
Sections \ref{sect:riccilog} and \ref{sect:riccicyl}, we show explicitly the relation between the Ricci flow and
recall some results on the Ricci flow on a cylinder that might be of interest
to the more geometrically minded reader.

\section{The relation between Ricci flow and 
the logarithmic diffusion equation}
\label{sect:riccilog}

The Ricci flow and the logarithmic diffusion equation are two sides of a same coin. 
For the benefit of the reader, let us spell out the relation between these two equations:
Given two metrics $g$ and $\tilde{g}$ related by a $\tilde{g}=e^{2f}g$
the scalar curvature transforms as
\[
R_{\tilde{g}}=e^{-2f}\left(R_g-2\Delta_g f\right).
\]
Regarding the geodesic curvature of the boundary, if
$N_{g}$ is the outward pointing unit (with respect to $g$) normal vector,
we have the transformation formula
\[
k_{\tilde{g}}=e^{-f}\left(k_g+\frac{\partial f}{\partial N_g}\right),
\]
\emph{where $N_{g}$ is the
outward pointing unit normal with respect to the metric $g$}.
Therefore, when written in terms of the conformal factor, that is, if we write
$\tilde{g}=ug_E$ with $g_E$ such that $R_{g_E}=0$ with totally
geodesic boundary (which can be done if $\mathcal{M}$
is a cylinder), then the Ricci flow equation (\ref{Ricciunnormalised}) becomes
\[
\left\{
\begin{array}{l}
\partial_t u=\Delta_{g_E} \log u\quad \mbox{in} \quad \mathcal{M}\times\left(0,T\right)\\
\dfrac{\partial u}{\partial N_{g_E}}= 
2\gamma u^{\frac{3}{2}}\quad \mbox{on}\quad \partial\mathcal{M}\times\left(0,T\right)\\
u\left(p,0\right)=u_0\left(p\right) \quad \mbox{in}\quad \mathcal{M},
\end{array}
\right.
\]
which is a logarithmic diffusion equation with nonlinear Robin boundary conditions.
In the case that the initial metric is of the form
$g_E=dx^2+f^2\left(x\right)\,d\theta^2$, then the Ricci flow equation
is equivalent to a 1D logarithmic diffusion equation, and viceversa, a 1D logarithmic
equation is equivalent to the Ricci flow equation on a cylinder where the
initial metric and the solution have an $\mathbb{S}^1$ symmetry.

To be more precise, given the 1D 
logarithmic diffusion equation with initial data $u_0$ on $\left[-l,l\right]$, we
can transform it into an
equivalent problem in the Ricci flow setting by considering a flat metric
on the cylinder
$\mathcal{M}=\left[-l,l\right]\times \mathbb{S}^1$, which we shall denote
by $g_E$, and then using the metric $g_0=u_0 g_E$
as the initial data for the Ricci flow equation. For the time evolving metric $g=u g_E$
we have the important relations
\begin{equation}
\label{eq:curvaturefromconformal}
R_{g}=-\frac{\partial_{xx} \log u}{u}=-\frac{u_t}{u}, \quad\quad k_{g}=\gamma,
\end{equation}
and 
\begin{equation}
\label{eq:conformalfromcurvature}
u\left(x,t\right)=u_0\left(x\right)\exp\left(\int_0^{t}R_g\left(x,\tau\right)\,d\tau\right).
\end{equation}

\section{Some results on the Ricci flow on a cylinder}
\label{sect:riccicyl}

Before we begin the task of proving the main results of this paper, let us give a quick survey on 
what is known about the boundary value problem (\ref{Ricciunnormalised}) on a cylinder.
In this section, we will mention the normalised Ricci flow, although it will not be 
directly used
in our arguments. The normalised Ricci flow is obtained from the unnormalised flow by 
rescaling the solution so that the area of the surface remains constant, and then
rescaling the time variable appropriately (see \cite{cortissoz20}).

\subsection{The case $\gamma=0$}

This case was treated by Brendle in \cite{brendle02} using a doubling argument. In this case,
the normalised and unnormalised flow coincide, and there is exponential convergence towards
a flat metric as shown in the case of closed surfaces of Euler characteristic zero in
\cite{Hamilton88}. In terms, of the the logarithmic diffusion equation, this implies
that given initial data $u_0>0$ which satisfies $\partial_x \log u_0\left(\pm l\right)=0$, the solution
to the logarithmic diffusion equation converges exponentially towards a constant.

\subsection{The Case $R\geq 0$, $\gamma\leq 0$}

This case was treated in \cite{cortissoz16}. We summarize some of the results obtained in that paper.
It was shown that on the cylinder 
$\left[-l,l\right]\times \mathbb{S}^1$ for initial data 
\[
g_0=dx^2+f\left(x\right)^2\,d\theta^2
\]
both the normalised and unnormalised flow exists for all time. Then a family of examples
was constructed for which the normalised flow converges towards a flat metric with
totally geodesic boundary, but for which the curvature does not converge exponentially,
again in stark contrast to the case of totally geodesic boundary and
closed surfaces of Euler characteristic zero (\cite{brendle02,Hamilton88}).
That is, if $R_{\max}=\sup_{\mathcal{M}\times \left[0,t\right]}R_g$, then we have that
\[
R_{\max}\left(t\right)\rightarrow 0, \quad \mbox{but}\quad R_{\max}\left(t\right)\geq \frac{2}{t}.
\]
This result might
be of some interest in view of the results 
on the existence of slowly convergent Yamabe flows constructed in \cite{carlotto15}, this
of course in dimensions $n\geq 3$.
It is on some of the results proved in \cite{cortissoz16} that
we base our proof of Theorem \ref{thm:1Dlogarithmic} part (ii).

\section{Some useful formulas and estimates}

In this section we collect some formulas and facts that will be used
to prove the main results of this paper.

We begin with some geometric facts related to a
time evolving metric $g$ that is a solution to the Ricci
flow equation (\ref{eq:unriccifloweq}). First,
we can compute the evolution
equation satisfied by the scalar curvature of the metric $R_g$:
\begin{equation}
\label{eq:unriccifloweq}
\left\{
\begin{array}{l}
\partial_t R_g=\Delta_g R_g + R_g^2 \quad \mbox{in}\quad \mathcal{M}\times\left(0,T\right)\\
\displaystyle\frac{\partial R_g}{\partial N_g}=k_g R_g \quad \mbox{on}\quad \partial \mathcal{M}\times\left(0,T\right),
\end{array}
\right.
\end{equation}
where $N_g$ is the outward pointing unit normal
(for a proof see \cite{cortissoz20}). 
Using 
the parabolic maximum principle and
Hopf's boundary point lemma it is not 
difficult to show that positive and negative curvature are preserved by the flow:
that is, the concavity or convexity of $\log u_0$ is maintained throughout the evolution for solutions to (\ref{eq:boundaryvaluelogarithmic})
with $p=\frac{3}{2}$; this also
holds for (\ref{eq:boundaryvaluelogarithmic}) with
general $p$ as we shall show below.
In what follows $A_g$ denotes the area of the surface $\mathcal{M}$ with respect to the
time evolving metric
$g\left(t\right)$ and $L_g$ denotes the length of the boundary $\partial \mathcal{M}$ also
with respect to the metric $g\left(t\right)$. An important observation
is that if $g=u\left(x,\theta,t\right)g_{E}$, $g_E$ a flat metric on the cylinder with
totally geodesic boundary, then
\begin{equation}
\label{eq:definitionarea}
A_g\left(t\right)= \int_{\mathcal{M}}\,dA_{g} = \int_0^{2\pi}\int_{-l}^{l} u\left(x,\theta,t\right)\, dx\, d\theta.
\end{equation}
In PDE jargon, $A_g$ is called the mass of $u$ and it is denoted by $m$
(see \cite{shimojo18}).
In the case of the unnormalised flow 
the area (mass) satisfies the equation
\begin{equation}
\label{eq:area}
A_g'\left(t\right)=\left(\int_{\mathcal{M}}\,dA_{g}\right)_t= -\int_{\mathcal{M}}R_g\,dA_{g},
\end{equation}
which follows easily from (\ref{eq:definitionarea}) and (\ref{eq:curvaturefromconformal}).

We also have the formula
\begin{equation}
\label{eq:totalcurvature}
\left(\int_{\mathcal{M}}R_g\,dA_{g}\right)_t=\int_{\partial \mathcal{M}}k_g R_g\,ds_{g},
\end{equation}
which follows from the equation satisfied by the curvature (\ref{eq:unriccifloweq}).

Define the boundary average curvature $r_{\partial}\left(t\right)$ as
\[
r_{\partial}\left(t\right)=\frac{\int_{\partial \mathcal{M}}R_g\, ds_g}{L_g},
\]
where $L_g$ is the length of the boundary with respect to the metric $g$.
We can compute
\begin{equation*}
L_g'\left(t\right)=-\frac{1}{2}\int_{\partial\mathcal{M}} R_g\,ds_g
=-\frac{1}{2}r_{\partial}\left(t\right)L_g\left(t\right).
\end{equation*}
Therefore,
\begin{equation}
\label{eq:length}
L_g\left(t\right)=L_g\left(0\right)\exp\left(-\frac{1}{2}\int_0^t r_{\partial}\left(\tau\right)\,d\tau\right).    
\end{equation}
From (\ref{eq:area}) and the Gauss-Bonnet theorem, with $k_g=\gamma$, we obtain
\begin{equation}
\label{eq:arealength}
A_g'\left(t\right)=2\gamma L_g\left(t\right).
\end{equation}

Next, we prove a geometric estimate
relating the area of a cylinder and the
length of its boundary (Lemma 2.1 in \cite{cortissoz16}).

\begin{lemma}
\label{lemma:areatolength}
Consider a metric (not necessarily time evolving) on the cylinder $\left[-l,l\right]\times \mathbb{S}^1$ of the form
\[
g=dx^2+f^2\left(x,\theta\right)\,d\theta^2
\]
Assume that the curvature satisfies $R_g\geq 0$ and the geodesic curvature
of the boundary can be bounded $\left|k_g\right|\leq \alpha$. Then
the following estimate holds
\[
A_g\leq \frac{2L_g}{\alpha}\sinh{\alpha l}.
\]
\end{lemma}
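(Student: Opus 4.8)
The plan is to exploit the warped-product structure to reduce the two-dimensional estimate to a one-dimensional comparison argument for the profile function $f(x,\theta)$ along each meridian $\theta=\text{const}$. First I would write out the geometric quantities explicitly: for $g=dx^2+f^2\,d\theta^2$ the area element is $f\,dx\,d\theta$, so $A_g=\int_0^{2\pi}\int_{-l}^{l} f(x,\theta)\,dx\,d\theta$, while the length of the boundary is $L_g=\int_0^{2\pi}\bigl(f(-l,\theta)+f(l,\theta)\bigr)\,d\theta$ (possibly one just takes $L_g=\int_0^{2\pi}f(l,\theta)\,d\theta$ depending on the normalization, but either way it is an integral of boundary values of $f$). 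For such a warped product the Gauss curvature is $R_g=-\partial_{xx}f/f$ (computed along the $x$-direction for each fixed $\theta$, since the metric has no cross terms), and the geodesic curvature of the boundary components $\{x=\pm l\}$ is $k_g=\mp\partial_x f/f$ evaluated at $x=\pm l$ — I would pin down the signs carefully from the outward-normal convention.

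The heart of the matter is then a pointwise ODE comparison in $x$ for fixed $\theta$. The hypothesis $R_g\ge 0$ says exactly that $\partial_{xx}f\le 0$, i.e. $f(\cdot,\theta)$ is concave on $[-l,l]$; the hypothesis $|k_g|\le\alpha$ bounds the logarithmic derivative of $f$ at the endpoints, $|\partial_x f(\pm l,\theta)|\le \alpha f(\pm l,\theta)$. I would like to conclude that for each $\theta$,
\[
\int_{-l}^{l} f(x,\theta)\,dx \le \frac{f(-l,\theta)+f(l,\theta)}{\alpha}\sinh(\alpha l),
\]
which upon integrating in $\theta$ gives the claimed bound. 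To prove this one-dimensional inequality, compare $f$ with the function $h(x)=A\cosh(\alpha x)+B\sinh(\alpha x)$ solving $h''=\alpha^2 h$ with matching boundary data $h(\pm l)=f(\pm l,\theta)$: concavity of $f$ against convexity of $h$ forces $f\le h$ on $[-l,l]$ provided $h\ge f$ at the endpoints with the right comparison of derivatives — here the endpoint derivative bound $|\partial_x f(\pm l)|\le\alpha f(\pm l)$ is precisely what guarantees $h\ge 0$ and dominates $f$ throughout. A clean way: since $f$ is concave, $f$ lies above its chords and below its tangent lines; the endpoint slope bound keeps $f$ from growing too fast as one moves inward, so $f$ stays below the cosh-type barrier. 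Then $\int_{-l}^l h\,dx$ is an elementary integral which, after using $h(\pm l)=f(\pm l)$, evaluates to exactly $\frac{f(-l)+f(l)}{\alpha}\sinh(\alpha l)$.

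The main obstacle I anticipate is making the comparison $f\le h$ rigorous when $f$ is only concave (not necessarily smooth, and the endpoint derivatives could be one-sided), and handling the case where $\partial_x f$ changes sign in the interior — concavity plus the two endpoint bounds should still trap $f$, but one must argue that the maximum of $f-h$ on $[-l,l]$ cannot occur in the interior (there $f''-h''\le -\alpha^2 h<0$ unless $h\le 0$, and one checks $h>0$ on the interval from the endpoint data and the bound on $k_g$), and cannot strictly exceed zero at the endpoints where $f=h$. A secondary technical point is the boundary-length normalization (whether $L_g$ counts one or both ends); since the metric is $\mathbb{S}^1$-symmetric in the applications — $f=f(x)$ — this is harmless, and in any case the inequality is stated with a single constant $2$ in front of $L_g$ which absorbs the ambiguity. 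Once the pointwise inequality is in hand, integrating over $\theta\in[0,2\pi]$ and recognizing $A_g$ and $L_g$ finishes the proof.
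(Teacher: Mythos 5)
Your reduction to a fibrewise one--dimensional estimate is the right idea, and your identifications $R_g\ge 0\Leftrightarrow\partial_{xx}f\le 0$ and $|k_g|\le\alpha\Leftrightarrow|\partial_xf(\pm l,\theta)|\le\alpha f(\pm l,\theta)$ are correct. The fatal problem is that your central comparison goes the wrong way. If $h$ solves $h''=\alpha^2h$ with $h(\pm l)=f(\pm l,\theta)$, then $f-h$ is concave and vanishes at both endpoints, hence $f\ge h$ on all of $[-l,l]$ --- the opposite of what you claim. Your maximum-principle step inverts the sign convention: $(f-h)''<0$ rules out an interior \emph{minimum} of $f-h$, not an interior maximum (a strictly concave function attains its maximum in the interior). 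The flat cylinder $f\equiv c$ (so $R_g=0$, $k_g=0$) is already a counterexample: $h(x)=c\cosh(\alpha x)/\cosh(\alpha l)<c=f(x)$ for $|x|<l$, and $\int_{-l}^{l}h\,dx<\int_{-l}^{l}f\,dx$. (A secondary slip: $\int_{-l}^{l}h\,dx=\frac{f(-l)+f(l)}{\alpha}\tanh(\alpha l)$, not $\frac{f(-l)+f(l)}{\alpha}\sinh(\alpha l)$.) The conceptual point being missed is that concavity together with the endpoint \emph{values} cannot bound $f$ from above in the interior --- a tall tent function is concave --- so one must propagate the boundary \emph{derivative} bound inward rather than anchor a barrier by endpoint values alone.

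That is exactly what the paper's proof does, via a Riccati inequality: setting $\varphi=\partial_xf/f$, one has $\partial_x\varphi=-\tfrac{1}{2}R_g-\varphi^2\le 0$, so $\varphi$ is nonincreasing in $x$; combined with $|\varphi(\pm l,\theta)|=|k_g|\le\alpha$ this forces $-\alpha\le\varphi\le\alpha$ everywhere, hence $f(x,\theta)\le f(-l,\theta)e^{\alpha(x+l)}$ and $f(x,\theta)\le f(l,\theta)e^{\alpha(l-x)}$, and integrating in $x$ and then in $\theta$ gives the stated area bound. If you want to keep an elementary barrier argument in your style, the correct barrier is the tangent line at the boundary (a concave function lies \emph{below} its tangents): $f(x,\theta)\le f(-l,\theta)\bigl(1+\alpha(x+l)\bigr)$ and symmetrically from $x=l$; splitting the $x$-integral at $0$ and using the tangent from the nearer end gives $\int_{-l}^{l}f\,dx\le\bigl(f(-l,\theta)+f(l,\theta)\bigr)\bigl(l+\tfrac{\alpha l^2}{2}\bigr)$, and the elementary inequality $s+\tfrac{s^2}{2}\le 2\sinh s$ (with $s=\alpha l$) then recovers the lemma. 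Either repair works; the convex $\cosh$ barrier matched only at the endpoints does not.
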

\begin{proof}
For a metric of the form $dx^2 + f^2\left(x,\theta\right)d\theta^2$, define
$\varphi=\dfrac{f'}{f}$, where the prime denotes
differentiation with respect to $x$. Then we have that
\[
\varphi'=-\frac{R_g}{2}-\varphi^2.
\]
Therefore, 
\[
\varphi\left(x,\theta\right)\leq \alpha,
\]
and after a second integration
\[
f\left(x,\theta\right)\leq \frac{1}{\alpha}f\left(-l,\theta\right)e^{\alpha r},
\]
and hence,
\[
A_g=\int_0^{2\pi}\int_{-l}^{l}f\left(x,\theta\right)\,dr\,d\theta\leq \frac{2L_g}{\alpha}\sinh\left(\alpha l\right),
\]
and the lemma is proved.
\end{proof}

We finish this section by showing, as promised above, that 
the convexity (concavity) of $\log u_0$ is preserved by the solution
to (\ref{eq:boundaryvaluelogarithmic}) with initial data $u_0$.
\begin{lemma}
\label{lemma:preservingsign}
Let $p$ be arbitrary, and
let $u_0>0$ be such that the compatibility condition 
$$\partial_x \log u_0\left(\pm l\right)= \pm 2\gamma u_0^{p-1}\left(\pm l\right)$$
holds.
Let $u>0$ be the solution to (\ref{eq:boundaryvaluelogarithmic})
with initial data $u_0$. Then if $\partial_{xx}\log u_0 >0$
($\partial_{xx}\log u_0<0$), $\partial_{xx}\log u >0$
(resp. $\partial_{xx}\log u< 0$)
as long as the solution exists.
 \end{lemma}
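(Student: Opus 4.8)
The plan is to introduce $w:=\partial_{xx}\log u=\partial_t u$ and show that $w$ satisfies a \emph{linear, homogeneous} parabolic equation together with a \emph{homogeneous} Robin boundary condition; once this is in place, preservation of the sign is a standard application of the strong parabolic maximum principle and Hopf's boundary point lemma. Differentiating $\partial_t u=\partial_{xx}\log u$ once more in time and using $\partial_{xx}\log u=u_{xx}/u-(u_x/u)^2=w$ to eliminate $u_{xx}$, one gets
\begin{equation*}
w_t=\frac{1}{u}\,w_{xx}-\frac{2u_x}{u^2}\,w_x+\Big(\frac{u_x^2}{u^3}-\frac{w}{u}\Big)w .
\end{equation*}
For the boundary, differentiating the identity $u_x(\pm l,t)=\pm 2\gamma u^p(\pm l,t)$ in $t$ and using $\partial_t u=w$, $\partial_t u_x=\partial_x u_t=\partial_{xxx}\log u=w_x$ gives $w_x(\pm l,t)=\pm 2p\gamma u^{p-1}(\pm l,t)\,w(\pm l,t)$, i.e.\ $\partial w/\partial N=2p\gamma u^{p-1}w$ at $x=\pm l$, where $N$ is the outward unit normal. (For $p=\tfrac32$ this is consistent with the curvature boundary condition in \eqref{eq:unriccifloweq} once one accounts for $N_g=u^{-1/2}N$.)

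Fix $0<\varepsilon<T'<T$. By standard parabolic regularity $u$ is smooth and bounded between two positive constants on $[-l,l]\times[\varepsilon,T']$, so the coefficients $a:=1/u$, $b:=-2u_x/u^2$, $c:=u_x^2/u^3-w/u$ and $\beta:=2p\gamma u^{p-1}$ are bounded and $a$ is uniformly positive there. Crucially, the equation and the boundary condition for $w$ are linear and homogeneous with these coefficients, and $-w$ solves exactly the same problem (the coefficients depend only on $u$); hence it suffices to treat the case $\partial_{xx}\log u_0>0$, the case $\partial_{xx}\log u_0<0$ following by applying the conclusion to $-w$.

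So assume $w(\cdot,0)=\partial_{xx}\log u_0>0$ on $[-l,l]$. Since $u\in C^2$ up to $t=0$, $w$ is continuous up to $t=0$, so $w(\cdot,\varepsilon)>0$ for all sufficiently small $\varepsilon>0$; fix such an $\varepsilon$ and let $t_0=\sup\{t\in[\varepsilon,T']:\ w>0 \text{ on } [-l,l]\times[\varepsilon,t]\}$. By continuity $t_0>\varepsilon$, and if $t_0<T'$ there is $x_0$ with $w(x_0,t_0)=0$ while $w\ge 0$ on $[-l,l]\times[\varepsilon,t_0]$. If $x_0\in(-l,l)$, then $w$ attains the value $0$, which is its minimum over $[-l,l]\times[\varepsilon,t_0]$, at the interior point $(x_0,t_0)$ with $t_0>\varepsilon$; since $w$ is a supersolution of $\partial_t-a\partial_{xx}-b\partial_x+\|c\|_\infty$ (an operator with nonnegative zeroth order coefficient, and $w\ge0$ on that slab), the strong minimum principle forces $w\equiv0$ on $[-l,l]\times[\varepsilon,t_0]$, contradicting $w(\cdot,\varepsilon)>0$. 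Therefore $w$ has no interior zeros up to time $t_0$, so $w>0$ in $(-l,l)\times(\varepsilon,t_0]$ and $x_0\in\{-l,l\}$; but then Hopf's boundary point lemma yields $\partial w/\partial N(x_0,t_0)<0$, whereas the boundary condition gives $\partial w/\partial N(x_0,t_0)=\beta(x_0,t_0)\,w(x_0,t_0)=0$, a contradiction. Hence $t_0=T'$, i.e.\ $w>0$ on $[-l,l]\times[\varepsilon,T']$; letting $\varepsilon\to0$ and $T'\to T$ and recalling $w(\cdot,0)>0$ shows $\partial_{xx}\log u>0$ on $[-l,l]\times[0,T)$.

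The main obstacle is the first step: identifying the correct boundary condition satisfied by $w=\partial_{xx}\log u$. The payoff is that it is homogeneous Robin with bounded coefficient, which is precisely what keeps the maximum-principle argument clean (an inhomogeneous term could destroy sign preservation). Everything afterward — the strong minimum principle and Hopf's lemma for a uniformly parabolic linear equation with bounded coefficients — is routine, the only mild care being the corner $(\pm l,0)$, which is sidestepped by starting the argument at time $\varepsilon>0$ and invoking continuity of $w$ up to $t=0$.
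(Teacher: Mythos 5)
Your proof is correct and follows essentially the same route as the paper: the paper works with $R=-u_t/u$ (the curvature), which satisfies a parabolic equation with the homogeneous Robin condition $\partial_x R(\pm l,t)=\pm 2\gamma(p-1)u^{p-1}R$, and then applies the strong maximum principle and Hopf's boundary point lemma exactly as you do for $w=u_t=-uR$. Your version is if anything more carefully written out (explicit coefficients, the reduction of one sign case to the other, and the treatment of the corner at $t=0$).
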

 
\begin{proof}
Let $R=-u_t/u$. Then, $R$ satisfies the following parabolic evolution equation
\begin{equation*}
\left\{
\begin{array}{l}
\partial_t R= \partial_{xx} R + R^2 \quad \mbox{in}\quad \left(-l,l\right)\times\left(0,T\right)\\
\partial_x R\left(\pm l, t\right) =\pm 2\gamma\left(p-1\right)u^{p-1}R. 
\end{array}
\right.
\end{equation*}
Let us show that $\partial_{xx}\log u_0 <0$ then it remains so. Indeed, 
the initial condition implies that
at $t=0$, $R>0$, and the compatibility condition shows that
$R$ is continuous in $\left[-l,l\right]\times\left[0,T\right)$. 
But then if at a future time $t>0$ we have that $R=0$, it
would be a minimum, and by Hopf's boundary point lemma there we must have that 
$\dfrac{\partial R}{\partial \eta}<0$, where $\eta$ represents the outward unit
exterior normal, which at $l$ is $\partial_x$ and at $-l$ is $-\partial_x$. But
if $R=0$ then $\dfrac{\partial R}{\partial \eta}=0$, which is a contradiction.
Hence the point $p$ where $R\left(p,t\right)=0$ is located in $\left(-l,l\right)$;
if such is the case, the parabolic maximum principle implies that a nonnegative maximum occurs 
in the interior only if $R$ is constant in space and time. This shows our claim.

\medskip
The proof that $\partial_{xx}\log u_0>0$ is preserved follows similar arguments, and
is left to the interested reader.
\end{proof}

\subsection{A comparison principle for the logarithmic diffusion equation}

The following is a basic, classical, and quite useful comparison principle for solutions to the logarithmic diffusion
equation.
% that will be useful in proving Theorems \ref{thm:1DlogarithmicB} and \ref{thm:%1Dlogarithmicp2}.
\begin{theorem}
Assume that $v$ and $u$ are strictly positive solutions to the logarithmic diffusion equation 
in $C^2\left(\overline{\Omega} \times \left(0,T\right)\right)\cap 
C\left(\overline{\Omega} \times \left[0,T\right)\right)$, $\Omega$ an open bounded subset of
$\mathbb{R}^n$, with
initial data $v_0>0$ and $u_0>0$ respectively. Assume that $v_0>u_0$,
and that at the boundary
we have inequalities $\partial v/\partial \eta\geq f\left(v\right)$ 
and $\partial u/\partial \eta \leq f\left(u\right)$.
Then $v> u$ for all $t\in \left(0,T\right)$.
\end{theorem}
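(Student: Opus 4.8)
The plan is to reduce the statement to the scalar linear parabolic maximum principle applied to the difference $w:=v-u$. Subtracting the equations $\partial_t v=\Delta\log v$ and $\partial_t u=\Delta\log u$ and using the elementary identity $\log v-\log u=a(x,t)\,w$ with
\[
a(x,t)=\int_0^1\frac{ds}{(1-s)\,u(x,t)+s\,v(x,t)},
\]
one obtains that $w$ solves
\[
\partial_t w=\Delta(a w)=a\,\Delta w+2\,\nabla a\cdot\nabla w+(\Delta a)\,w\qquad\text{in }\Omega\times(0,T).
\]
Since $u,v>0$, on every slab $\overline\Omega\times[\tau_1,\tau_2]\subset\overline\Omega\times(0,T)$ the functions $u,v$ are continuous and satisfy $0<\mu\le u,v\le M<\infty$, so $a$ is of class $C^2$ in $x$, uniformly elliptic ($1/M\le a\le 1/\mu$), and has bounded gradient and Laplacian there. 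On the boundary, subtracting the two boundary inequalities gives $\partial w/\partial\eta=\partial v/\partial\eta-\partial u/\partial\eta\ge f(v)-f(u)=b(x,t)\,w$, where $b(x,t)=\int_0^1 f'\!\left((1-s)u+s v\right)ds$ is bounded on $\partial\Omega\times[\tau_1,\tau_2]$; here one only needs $f$ of class $C^1$ on a compact subinterval of $(0,\infty)$, which covers $f(u)=2\gamma u^p$ for every $p$ since $u$ and $v$ remain in such an interval.

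Next I would argue by contradiction. As $\overline\Omega$ is compact and $w(\cdot,0)=v_0-u_0>0$, there is $\delta>0$ with $w(\cdot,0)\ge\delta$, and by continuity of $w$ on $\overline\Omega\times[0,T)$ we get $w>0$ on $\overline\Omega\times[0,\tau]$ for some $\tau>0$. Set $t_0:=\sup\{\,s\in[0,T):w>0\ \text{on}\ \overline\Omega\times[0,s]\,\}\ge\tau$. If $t_0=T$ we are done, so assume $t_0<T$; then $w>0$ on $\overline\Omega\times[0,t_0)$, while by maximality and compactness $w(\cdot,t_0)\ge0$ and $w(x_0,t_0)=0$ for some $x_0\in\overline\Omega$. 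To kill the sign-indefinite zeroth-order term I replace $w$ by $z:=e^{-\Lambda t}w$ with $\Lambda>\sup_{\overline\Omega\times[\tau/2,t_0]}|\Delta a|$; then
\[
\partial_t z=a\,\Delta z+2\,\nabla a\cdot\nabla z+c\,z,\qquad c:=\Delta a-\Lambda\le0,
\]
and $z\ge0$ on $\overline\Omega\times[0,t_0]$, $z(x_0,t_0)=0$, $\partial z/\partial\eta\ge b\,z$ on $\partial\Omega$, with all coefficients regular and bounded on $\overline\Omega\times[\tau/2,t_0]$.

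Finally I would invoke the classical strong maximum principle and Hopf's boundary point lemma on $\overline\Omega\times[\tau/2,t_0]$. If $x_0\in\Omega$, then $z$ is a nonnegative supersolution attaining its minimum value $0$ at the interior space-time point $(x_0,t_0)$ and $c\le0$, so the strong minimum principle forces $z\equiv0$ on $\overline\Omega\times[0,t_0]$ (working on the connected component of $x_0$ if $\Omega$ is disconnected), hence $w(\cdot,0)\equiv0$, contradicting $w(\cdot,0)\ge\delta>0$. If instead $x_0\in\partial\Omega$, then by the interior case $z>0$ in $\Omega\times(0,t_0]$, so $(x_0,t_0)$ is a strict boundary minimum; Hopf's lemma (using the interior-ball condition at $x_0$) yields $\partial z/\partial\eta(x_0,t_0)<0$, whereas the boundary inequality gives $\partial z/\partial\eta(x_0,t_0)\ge b(x_0,t_0)\,z(x_0,t_0)=0$ — a contradiction. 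Therefore $t_0=T$, i.e. $v-u=w>0$ on $\overline\Omega\times[0,T)$, which is the assertion. The step deserving the most care is this last boundary alternative: one must first secure strict positivity of $z$ in the interior up to time $t_0$ so that Hopf's lemma applies, and one must ensure $\partial\Omega$ is regular enough (an interior-ball condition) for the lemma to hold; the linearisations producing $a$ and $b$, and the weighting making $c\le0$, are the routine preparatory steps.
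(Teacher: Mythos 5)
Your proposal is correct and follows essentially the same route as the paper: linearize the difference $w=v-u$ via the coefficient $\int_0^1\bigl((1-s)u+sv\bigr)^{-1}ds$, then rule out a first touching point by the strong maximum principle in the interior and Hopf's boundary point lemma at the boundary, where the boundary inequalities force $\partial w/\partial\eta\ge 0$. Your version is merely more careful about the bookkeeping (the first-touching-time argument, the exponential weighting to make the zeroth-order coefficient nonpositive, and the regularity of the coefficients on compact time slabs), and your $C^1$ assumption on $f$ is not actually needed since at the touching point $f(v)-f(u)=0$ directly.
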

\begin{proof}
First notice that 
\[
\Delta \log v-\Delta \log u =\zeta\left(u,v\right)\Delta\left(v-u\right)
+2\nabla\zeta\left(u,v\right)\cdot \nabla \left(v-u\right)+
\left(v-u\right)\Delta\zeta\left(u,v\right),
\]
where
\[
\zeta\left(u,v\right)=\int_0^1 \frac{1}{\left(1-t\right)v+tu}\,dt>0.
\]
Hence, $z=v-u$ satisfies the parabolic evolution problem
\[
\left\{
\begin{array}{l}
\partial_t z = a\Delta z + \nabla \mathbf{b}\cdot \nabla z+ cz
\quad \mbox{in}\quad \Omega\times \left(0,T\right)\\
%\frac{\partial z}{\partial \eta}\geq 0\\
z_0=v_0-u_0 >0 \quad \mbox{in} \quad \Omega.
\end{array}
\right.
\]
Assume that at some $t>0$ we have $z=0$. Then, the parabolic maximum principle
implies that this happens at a boundary point
$p\in \partial \Omega \times \left\{t\right\}$
. Hence, Hopf's boundary point lemma
implies that at this point $\partial z/\partial \eta \left(p\right)<0$. But this contradicts
our hypothesis. Hence, at all times we must have $z>0$, and the theorem is proved.
\end{proof}

\medskip
\noindent
{\bf Remark.} $\Omega$ can be substituted in the statement by a compact manifold with boundary,
and in this case $\eta$ represents the exterior unit normal to the boundary with respect to the metric.

\section{Proof of Theorem \ref{thm:1Dlogarithmic}}
\label{proof:thm1}

\subsection{Proof of Part (i)}

%In this section we prove Theorems 1-3 about the evolution of a cylinder with boundary under the  Ricci flow %(\ref{Ricciunnormalised}) under the hypothesis that $R\leq 0$ and $\gamma\geq 0$
%which will be employed in our proof of Theorem \ref{thm:1Dlogarithmic}.
For $\gamma>0$ and initial data as in the hypothesis of part
(i) of Theorem \ref{thm:1Dlogarithmic},
we shall first prove global existence. 
A particular aspect is of this proof is that it uses
a compactness theorem for the Ricci flow on manifolds with 
boundary due to Gianniotis \cite{Gianniotis16}. Then with the aid of
some geometric estimates we shall give (non-optimal) blow-up rates 
for these global solutions.
\subsubsection{Global Existence}
\label{globalexistence}
In this section we show that in the unnormalised Ricci flow, if we start with
a metric on $\left[-l,l\right]\times\mathbb{S}^1$ of the form
\[
g_0=dx^2+f^2\left(x\right)d\theta^2,
\]
and whose curvature is negative, then the curvature remains uniformly bounded throughout 
the flow.
This in turn implies that the unnormalised flow exists for all time, and so does 
the solution to the corresponding 1D logarithmic diffusion equation.
The proof, which is a simple application of Gianniotis' Compactness 
Theorem (\cite{Gianniotis16}, see also \cite{cortissoz20}), goes as follows. 

Assume that at the boundary there is a sequence of times such that
$\left|R\right|$ blows up. 
Blow-up occurs at maximal rate at $\partial \mathcal{M}$ because 
if $R_{\min}\left(t\right)$ occurs at an interior point, the
maximum principle implies that at that instant $R_{\min}\left(t\right)$
is nondecreasing.
The needed
hypothesis for the application of Gianniotis' compactness theorem are verified just as it is done
in \cite{cortissoz20}; in this case, we do not need a bound
on the injectivity radius because we shall take the
blow up limit at a boundary point, and
in our case the focal radius and the boundary injectivity radius estimates are easy to obtain due to the simmetries,
and the fact that under the Ricci flow if $R_g<0$ distances are increasing.
Indeed, in the case we are treating, 
any geodesic issuing orthogonally from a boundary component
is minimizing and it hits the other boundary component orthogonally.
Hence, the focal radius is equal to the length of any
of these geodesics, and 
the boundary injectivity radius
is given by half the length of the shortest geodesic
joining the two boundary components of the cylinder; but since $R_g<0$, the lengths of these gedesics is increasing,
 which
gives us the desired estimate on the boundary injectivity radius as claimed.

Next, if we let $M\left(t\right)=\sup_{\left[0,t\right]}\sup_{x\in \mathcal{M}} \left|R_g\left(x,t\right)\right|$,
then it is clear that
\[
\sup tM\left(t\right)=\infty.
\]
Thus, if we form a blow up limit, we obtain an eternal solution which is a half plane with totally geodesic boundary.
By doubling this solution we then obtain an eternal solution to the Ricci flow
on $\mathbb{R}^2$ (see Appendix B.1 in \cite{cortissoz20}),
and this would be an eternal solution to the Ricci flow with nonpositive curvature;
but this cannot be,
as any eternal solution to the Ricci flow has nonnegative curvature 
(as explained in (vi), page 3, in \cite{daskalopoulos06}).
From this we deduce that
the curvature must remain uniformly bounded
along the flow, even if it is global.
As an aside, this is the behaviour of the curvature in the case of the unnormalised flow 
in the case of a compact hyperbolic surface.

This shows that, given
an initial metric with $\mathbb{S}^1$ symmetry and 
of negative curvature, 
the unnormalised Ricci flow (\ref{Ricciunnormalised}) exists for all time, 
and hence the solution
to the logarithmic diffusion equation (\ref{eq:boundaryvaluelogarithmic})
with initial data $u_0$ such that $\partial_{xx}\log u_0<0$ and
which satisfies the compatibility condition. 

\subsubsection{Blow-up}

Because 

\[
A_g''\left(t\right) = -\int_{\partial \mathcal{M}} \gamma R_g\ ds_{g} \geq 0,
\]
by integrating twice we obtain

\begin{equation*}\label{left_area}
A_g\left(t\right) \geq A_g'(0)t + A_g(0),
\end{equation*}
which proves that the area grows at least linearly. 
However, as stated in the theorem, we can do better.
First notice that as $$\displaystyle \frac{\partial R_g}{\partial N_g}=\gamma R_g <0,$$
the maximum of the curvature at time $t>0$ must lie in the interior of $\mathcal{M}$.
%$r\geq \left(1-\eta\right)r_{\partial}$ then 
%necessarily $R_{\max}\left(t\right)$ is in the interior of $\mathcal{M}$.
Hence, $R_{\max}\left(t\right)$
satisfies a differential inequality
\[
\frac{d R_{\max}\left(t\right)}{dt}\leq R_{\max}^2.
\]
Therefore, writing $B=R_{\max}\left(0\right)<0$,
\begin{equation}
\label{ineq:maxcurvature}
R_{\max}\left(t\right)\leq \frac{B}{1-Bt},
\end{equation}
and from this we deduce
\[
-\frac{1}{2}r_{\partial}\geq -\frac{1}{2}\frac{B}{1-Bt}.
\]
But as from (\ref{eq:arealength}) we have
\[
A_g'\left(t\right)\gtrsim \exp\left(-\frac{1}{2}\int_0^t r_{\partial}\left(\tau\right)\,d\tau\right)
\gtrsim \int_0^{t}\sqrt{1-B\tau}\,d\tau,
\]
by integration we obtain
\begin{equation}
\label{ineq:areagrowthbelow}
A_g\left(t\right)\gtrsim t^{\frac{3}{2}}.
\end{equation}

A lower estimate $u_{\max}$ follows from the fact that
\[
t^{\frac{3}{2}}\lesssim A_g\left(t\right)\lesssim 
\int_{-l}^l u\left(x,t\right)\,dx \leq 2l \cdot u_{\max}\left(t\right).
\]
However, as we shall see below, this blow-up rate is not optimal.

To obtain an upper estimate
for the blow-up rate, notice that the argument in Section
\ref{globalexistence} actually shows that 
$R$ is uniformly bounded on the
interval of existence of 
the solution, that is, there is an $M$ such that $-M\leq R <0$ 
for all time. But since
$R=-u_t/u$, by integration we obtain that $u\lesssim e^{Mt}$.

On the other hand, a lower bound for $u_{\min}$ follows from 
\[
u_{\min}\left(t\right)\geq u_{0,\min}e^{-\int_0^t R_{\max}\left(\tau\right)\,d\tau}\gtrsim t,
\]
and, by the Topping-Yin interior estimate
\cite{topping17}, we must actually have that $$u_{\min}\left(t\right)\sim t.$$

Estimate (\ref{ineq:areagrowthbelow}) is rather interesting
in view of the following two facts: in the case of hyperbolic compact surfaces, under the unnormalised flow,
the area grows linearly, and in the case of surfaces of Euler characteristic
$\chi\left(\mathcal{M}\right)=0$ the area remains constant.

\hfill $\Box$

\subsection{Proof of Part (ii)}
This part of Theorem \ref{thm:1Dlogarithmic} follows from the results in \cite{cortissoz16}.
Indeed, if we have initial data satisfying $\partial_{xx} \log u_0 <0$, this implies that
the corresponding metric $u_0 g_E$ has positive curvature, and thus we are in the case
studied in \cite{cortissoz16}, and
hence, global existence follows from Theorem 1.1 in \cite{cortissoz16}. 
Blow-down is a consequence of global existence plus the 
existence of a Lyapunov functional since, from the maximum principle
and Hopf's boundary point lemma, $u$ is bounded above.
However, with a little more work we can give a blow-down rate. Indeed, by Theorem 4.1
in \cite{cortissoz16}, we have 
that 
\[
\int_{\mathcal{M}}R_g\, dA_g \lesssim \frac{1}{t}.
\]
Therefore, by the Gauss-Bonnet theorem
\[
-\int_{\partial \mathcal{M}}\gamma \, ds_g \lesssim \frac{1}{t},
\]
and, since $\gamma<0$, $L\left(t\right)\lesssim 1/t$. But by Lemma \ref{lemma:areatolength},
and the fact that distances are diminishing as the curvature is positive, we 
necessarily have that
\[
A_g\left(t\right)\lesssim L_g\left(t\right)\lesssim\frac{1}{t}.
\]
A blow-down rate follows immediately as
\[
u_{\min}\left(t\right)\lesssim \int_{-l}^{l}u\left(x,t\right)\,dx \lesssim 
A_g\left(t\right).
\]

\subsubsection{Examples of fast blow-down rate} 
\label{ssection:uniformblowdown}
Now, we construct initial conditions for (\ref{eq:boundaryvaluelogarithmic}),
with $p=\frac{3}{2}$, so that the solution has a fast blow-down rate as
stated in Theorem \ref{thm:1Dlogarithmic} part (ii). In order to do this, let us recall that
in Section 4.1 of \cite{cortissoz16}, examples are suggested to show that
the curvature converges uniformly to 0 in the case of the
normalised flow.
We shall show next how to construct these examples.
Consider the cylinder $\left[-l,l\right]\times\mathbb{S}^1$ with the following metric
\[
g_0=dx^2+\left(\cos x-0.25 x^2\right)^2\,d\theta^2.
\]
For the choice $l\sim 0.74013$, the solution to the Ricci flow equation
with this initial data is at least $C^3$ in $\overline{\mathcal{M}}\times\left[0,T\right)$. 
Indeed, for this value of $l$ we obtain that the initial metric
satisfies the compatibility condition
\[
\frac{\partial R_{g_0}}{\partial N_{g_0}}=k_{g_0} R_{g_0},
\]
which guarantees that the solution to the equation satisfied by 
the scalar curvature is at least $C^1$ in $\overline{\mathcal{M}}\times\left[0,T\right)$,
which in turn translates into the regularity claimed for the solution to the Ricci flow
with initial data $g_0$.
It can be easily shown that 
the curvature for $g_0$ is decreasing from the middle parallel, that
is 
from $\left\{0\right\}\times \mathbb{S}^1$, towards the boundary components.
Then, we can differentiate through the equation and use
the maximum principle to show that the curvature
decreases from the middle parallel towards the boundary
throughout the flow.
For this 
type of example, it is shown in \cite{cortissoz16} that the solution to the unnormalised Ricci
flow with initial data $g_0$ satisfies
\[
\int_0^{\infty}R_{\max}\left(\tau\right)-R_{\min}\left(\tau\right)\,d\tau<\infty.
\]
Let $u_0$ be such that $g_0=u_0 g_E$, where $g_E$ is a flat metric on the
cylinder with totally geodesic boundary. Notice that $\partial_{xx}\log u_0 >0$ as $R_{g_0}<0$. 
It follows then that there are solutions to 
(\ref{eq:boundaryvaluelogarithmic}) with $\gamma<0$ and initial data $u_0$ which
satisfies that 
\[
0<\frac{u_{\max}\left(t\right)}{u_{\min}\left(t\right)}\leq C,
\]
for a constant $C>0$, and thus for this family of examples the blow-down is uniform,
that is, both $u_{\max}$ and $u_{\min}$ converge to zero
as $t\rightarrow \infty$ at the same rate.
But then, for these examples one can show that the blow-down rate is quite fast.
Indeed, by Theorem 1.3 in \cite{cortissoz16}, we have that
\[
R_{\max}\left(t\right)\gtrsim t,
\]
and hence, from (\ref{eq:conformalfromcurvature}),
\[
u_{\min}\left(t\right)\lesssim e^{-Dt^2},
\]
for a constant $D>0$.

We must observe that by properly rescaling $u_0$ we can find initial conditions
$u_{0,l,\gamma}$
on any interval $\left[-l,l\right]$ such that for a given $\gamma<0$,
$\partial_x \log u_{0,l,\gamma}\left(\pm l\right)=\pm 2\gamma \sqrt{u_{0,l,\gamma}\left(\pm l\right)}$.
So we have shown,
\begin{prop}
\label{thm:examplesuniformblowdown}
For $\gamma>0$ and $l>0$
there are initial data defined on $\left[-l,l\right]$ such that the solution to 
(\ref{eq:boundaryvaluelogarithmic}) on $\left[-l,l\right]\times \left(0,T\right)$
are global (i.e., $T=\infty$),
 and blow down uniformly, in the sense 
 described above, at rate at least $e^{-Dt^2}$, for 
a constant $D$ that only depends on the initial data and on $\gamma$ and $l$.
\end{prop}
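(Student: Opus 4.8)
The plan is to produce one explicit example, show that it is global and blows down uniformly at the Gaussian rate, and then spread it over all intervals and all boundary parameters by scaling; the two analytic inputs are already available in \cite{cortissoz16}. A structural remark first: the uniform infinite-time blow-down asserted here is the phenomenon of Theorem \ref{thm:1Dlogarithmic}(ii), which occurs precisely for the draining boundary sign, so the parameter written $\gamma>0$ in the statement is the draining parameter $\gamma<0$ of that theorem; the closing scaling step will realize every admissible value, i.e. every magnitude $|\gamma|>0$, on every interval $[-l,l]$.

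First I fix the warped-product metric $g_0=dx^2+\left(\cos x-\tfrac14 x^2\right)^2 d\theta^2$ on $[-l_0,l_0]\times\mathbb{S}^1$, with $l_0\approx 0.74013$ chosen so that the curvature compatibility $\partial R_{g_0}/\partial N_{g_0}=k_{g_0}R_{g_0}$ holds; this is exactly what upgrades the solution to $C^3$ up to the boundary. With $K=-f''/f$ and $f=\cos x-\tfrac14 x^2$ one checks directly that $f>0$ and $f''<0$ on $[-l_0,l_0]$, so $R_{g_0}=-2f''/f>0$ throughout, $R_{g_0}$ is maximal on the central parallel $\{0\}\times\mathbb{S}^1$, and it decreases monotonically toward each boundary component; writing $g_0=u_0 g_E$ this says $\partial_{xx}\log u_0<0$. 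Global existence of the corresponding solution is then Theorem 1.1 of \cite{cortissoz16} (equivalently Theorem \ref{thm:1Dlogarithmic}(ii)), and Lemma \ref{lemma:preservingsign} keeps $\partial_{xx}\log u<0$, i.e. $R_g>0$, for all time.

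The next step is uniform blow-down. Differentiating the curvature evolution equation (\ref{eq:unriccifloweq}) in $x$ and using the parabolic maximum principle together with Hopf's lemma on $\partial\mathcal{M}$, the monotone spatial profile of $R_g$ is preserved; this is the configuration of Section 4.1 of \cite{cortissoz16}, in which $\int_0^{\infty}\left(R_{\max}(\tau)-R_{\min}(\tau)\right)d\tau<\infty$. Feeding this into the conformal representation $u(x,t)=u_0(x)\exp\left(-\int_0^t R_g(x,\tau)\,d\tau\right)$, the relation (\ref{eq:conformalfromcurvature}), I bound $\log u_{\max}(t)-\log u_{\min}(t)$ by $\log\left(u_{0,\max}/u_{0,\min}\right)+\int_0^{\infty}\left(R_{\max}-R_{\min}\right)d\tau$, a finite constant; hence $u_{\max}(t)/u_{\min}(t)\leq C$ uniformly in $t$, so $u_{\max}$ and $u_{\min}$ decay at a common rate and the blow-down is uniform.

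For the rate, Theorem 1.3 of \cite{cortissoz16} gives $R_{\max}(t)\gtrsim t$ for this family, hence $\int_0^t R_{\max}(\tau)\,d\tau\gtrsim t^2$; combined with the integrable gap this yields $\int_0^t R_{\min}(\tau)\,d\tau\geq\int_0^t R_{\max}(\tau)\,d\tau-\int_0^{\infty}\left(R_{\max}-R_{\min}\right)d\tau\gtrsim t^2$, and the conformal representation gives $u_{\max}(t)\leq u_{0,\max}\exp\left(-\int_0^t R_{\min}(\tau)\,d\tau\right)\lesssim e^{-Dt^2}$, so $u_{\min}(t)\leq u_{\max}(t)\lesssim e^{-Dt^2}$. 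Finally I pass from this one example to all data: a domain dilation $x\mapsto(l/l_0)x$, with the attendant parabolic rescaling of time, carries the example onto $[-l,l]$, and replacing the conformal factor $u_0$ by $\lambda u_0$ leaves $\partial_x\log u_0$ unchanged while scaling $\sqrt{u_0}$ by $\sqrt{\lambda}$, so the compatibility relation $\partial_x\log u_0(\pm l)=\pm 2\gamma\sqrt{u_0(\pm l)}$ holds with a new $\gamma$ of the same (draining) sign and arbitrary magnitude. Both operations preserve positivity of $R_g$, the monotone profile, global existence, and the Gaussian decay, with $D$ now depending on $l$ and $\gamma$, which gives the asserted family for every $l>0$ and every admissible $\gamma$. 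The main obstacle is the uniform comparability $u_{\max}/u_{\min}\leq C$: without it the one-sided bound $R_{\max}\gtrsim t$ does not upgrade to a pointwise-in-$x$ lower bound for $R_g$, and blow-up of $R_{\max}$ cannot be converted into the sharp $e^{-Dt^2}$ decay of $u_{\min}$ --- this is exactly where the integrable-gap estimate of \cite{cortissoz16} does the essential work.
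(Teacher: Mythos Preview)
Your proof is correct and follows essentially the same approach as the paper: both build the explicit warped-product example $g_0=dx^2+(\cos x-\tfrac14 x^2)^2\,d\theta^2$ at $l_0\approx 0.74013$, invoke the monotone-profile configuration of \cite{cortissoz16} to obtain $\int_0^\infty(R_{\max}-R_{\min})\,d\tau<\infty$ and hence $u_{\max}/u_{\min}\leq C$, feed in $R_{\max}(t)\gtrsim t$ from Theorem~1.3 of \cite{cortissoz16} to get the $e^{-Dt^2}$ rate, and finish by rescaling to reach all $l$ and all admissible $\gamma$. You supply a bit more detail than the paper (the explicit passage from $R_{\max}\gtrsim t$ to $\int_0^t R_{\min}\gtrsim t^2$ via the integrable gap, and the mechanism of the scaling step), and you correctly flag that the $\gamma>0$ in the statement is a sign typo for the draining case $\gamma<0$; the paper's own text contains a matching slip where it writes ``$\partial_{xx}\log u_0>0$ as $R_{g_0}<0$'' when the construction actually gives $R_{g_0}>0$ and $\partial_{xx}\log u_0<0$.
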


\section{Proof of Theorems \ref{thm:1DlogarithmicB} and \ref{thm:1Dlogarithmicp2}}
\label{proof:thm23}
\subsection{A proof of Theorem \ref{thm:1DlogarithmicB}}
Let $\delta\geq 0$ and let u be the solution to (\ref{eq:boundaryvaluelogarithmic})
with $p=\frac{3}{2}-\delta$, and with
initial data $u_0>0$.
Choose {\bf any} $v_0$
which satisfies
the assumptions of Theorem \ref{thm:1Dlogarithmic} (i) 
(including the compatibility condition),
and let $v$ be the solution
to (\ref{eq:boundaryvaluelogarithmic}) with $p=\frac{3}{2}$ and
initial data $v_0$. 
Then, let $T>0$ be such that for all $t\geq T$
$$v\left(\cdot, t\right)\geq \max\left\{1, \max_{\left[-l,l\right]}u_0\right\},$$
which does exist since $v\gtrsim t$, and define
$w_0=v\left(\cdot, T\right)$. Let $w$ be the solution to the
logarithmic diffusion equation with $p=\frac{3}{2}$ and initial data $w_0$,
that is $w\left(\cdot,t\right)=v\left(\cdot,t+T\right)$. We
want to apply the comparison principle to 
$z=w-u$.
Denoting by $\partial/\partial\eta$ the operators $\partial_x$ at $x=l$ and $-\partial_x$ at $x=-l$ (i.e., the
partial derivative with respect to the the exterior
normal), since $w\geq 1$, we have that 
\begin{eqnarray*}
\frac{\partial w}{\partial \eta}= 2\gamma w^{\frac{3}{2}}
= 2\gamma w^{\delta}w^{\frac{3}{2}-\delta}
\geq 2\gamma w^{\frac{3}{2}-\delta},
\end{eqnarray*}
and then the comparison principle shows that
$z > 0$, which in turn shows that on any finite time interval $u$ remains bounded above; as
the maximum principle also implies that $u>0$ as long as it does exist, we have that 
the solution is global. Incidentally, this also shows that the blow-up rate, if 
blow-up occurs as we show below, cannot be faster
than $e^{Mt}$ with $M>0$ a constant that only depends on $l$ and $\gamma$.

To show that blow-up must occur, assume that $u$ remains uniformly bounded above. Clearly
$u$ is uniformly bounded away from 0, by the maximum principle and Hopf's boundary point lemma. Then,
the existence of a Lyapunov functional (see \cite{lappicy19}),
%\[
%\int_{-L}^{L}\left(\partial_x u\right)^2\,dx -
%\gamma \left[u^{p+1}\left(L\right)+u^{p+1}\left(-L\right)\right]
%\]
shows that along any sequence of times $t_k\nearrow \infty$,
the sequence $u_k:=u\left(\cdot, t_k\right)$ must converge to an equilibrium, say $u_{\infty}$. 
That is, $u_{\infty}$ must satisfy
\[
\partial_{xx}\log u_{\infty} =0,\quad \partial_x u_{\infty}\left(\pm l\right)=\mp 
\gamma u_{\infty}^{p}\left(\pm l\right),
\]
but then an integration by parts shows that this is impossible for $u_{\infty}>0$.

\subsubsection{Blow-up rates}
Let $\gamma>0$ and $p<2$.
Assume that at some time $t_0\geq 0$ $\partial_{xx}\log u>0$;
then for all $t>t_0$ this will continue to hold by Lemma \ref{lemma:preservingsign}, so we shall assume that
$t_0=0$.  
To estimate the blow-up rate, let us define 
\[
r_n\left(t\right)=\int_{-l}^{l}u\left(x,t\right)\,dx.
\]
We estimate its derivative as follows
\begin{eqnarray*}
r_n'\left(t\right)&=&n\int_{-l}^{l}u^{n-1}u_t\,dx = 
n\int_{-l}^{l}u^{n-1}\partial_{xx}\log u\,dx\\
&=&2\gamma n\left[u^{n+p-2}\left(l\right)+u^{n+p-2}\left(-l\right)\right]\\
&\geq &2\gamma n u_{\max}^{n+p-2}\left(t\right)=
2\gamma n \left(u_{\max}^{n}\left(t\right)\right)^{1+\frac{p-2}{n}}\\
&\geq& \frac{2\gamma n}{\left(2l\right)^{1+\frac{p-2}{n}}}
r_n^{1+\frac{p-2}{n}}\left(t\right),
\end{eqnarray*}
where we have used the fact that $\partial_{xx}\log u >0$
implies that the maximum of $u$ at time $t$ occurs at either $x=l$ or $x=-l$.
Solving this differential inequality we get
\[
r^{\frac{2-p}{n}}_n\left(t\right)\geq
r^{\frac{2-p}{n}}_n\left(0\right)+
\frac{2\left(2-p\right)\gamma}{\left(2l\right)^{1+\frac{p-2}{n}}}t.
\]
Hence, we have
\[
u_{\max}^{2-p}\left(t\right)\geq 
r^{\frac{2-p}{n}}_n\left(0\right)+
\frac{2\left(2-p\right)\gamma}{\left(2l\right)^{1+\frac{p-2}{n}}}t.
\]
Letting $n\rightarrow \infty$ yields,
\[
u_{\max}\left(t\right)\geq \left(u^{2-p}_{\max}\left(0\right)
+\frac{\gamma\left(2-p\right)}{l}t\right)^{\frac{1}{2-p}}\sim t^{\frac{1}{2-p}}.
\]

For $p=2$, the equation we obtain is
\[
r_n'\left(t\right)\geq \frac{\gamma n}{l}r_n\left(t\right),
\]
from which $u_{\max}\left(t\right)\gtrsim e^{\frac{\gamma}{l}t}$ follows.

Observe that we only know that there is global existence of solutions if $p\leq \frac{3}{2}$,
and that our estimate on maximum blow-up rate is for $p\leq \frac{3}{2}$ so it does not
cover exponents $\frac{3}{2}<p\leq 2$.

\subsection{A proof of Theorem \ref{thm:1Dlogarithmicp2}}
We only give an sketch as the proof follows the same ideas as the proof of Theorem
\ref{thm:1DlogarithmicB} previously presented. Indeed, for $\delta\geq 0$
let $p=\frac{3}{2}+\delta$, and $u_0>0$ be such that $\partial_{xx}\log u_0 <0$.
Let $u$ be the solution to (\ref{eq:boundaryvaluelogarithmic}), with 
$p=\frac{3}{2}+\delta$ and initial data $u_0$.
For a given $\gamma$, take {\bf any} 
$v_0>0$ constructed as in Proposition \ref{thm:examplesuniformblowdown} in Section \ref{ssection:uniformblowdown},
and satisfying the hypotheses of Theorem
\ref{thm:1Dlogarithmic}, and 
let $v$ be the solution
to (\ref{eq:boundaryvaluelogarithmic}) with 
$p=\frac{3}{2}$ and initial data $v_0$. Hence, as the solution
associated to $v_0$ blows down uniformly, there is a $T>0$ such that
for all $t\geq T$
\[
v\left(\cdot, t\right)\leq \min\left\{1, \min_{\left[-l,l\right]} u_0\left(x\right)\right\}
\]
Just as before, we let $w$ be the solution to (\ref{eq:boundaryvaluelogarithmic}) with initial data 
$w_0=v\left(\cdot, T\right)$, an define $z=u-w$. Again, $z$ satisfies
a semilinear parabolic equation, and at the boundary we have
\begin{eqnarray*}
\frac{\partial w}{\partial \eta}=2\gamma w^{\frac{3}{2}}
= 2\gamma w^{-\delta}w^{\frac{3}{2}+\delta}
\leq 2\gamma w^{\frac{3}{2}+\delta}.
\end{eqnarray*}
Therefore, as at $t=0$ we have that $z>0$ the comparison principle implies that 
$z>0$ for all time, that is $u>v>0$. Of course, this previous argument
also shows that the blow-down rate cannot be faster than $e^{-Dt^2}$, for
$D$ a constant that only depends on $l$ and $\gamma$. As the maximum principle and
Hopf's boundary point lemma also imply that $u$ is bounded above,
global existence follows. Finally, the existence of a Lyapunov functional shows that 
blow-down must occur.

%An observation is perhaps in place here:
%the fact that $v_0$ has been chosen arbitrarily, only depending on $\gamma$,
%shows that the minimal blow-down rate must be larger than $e^{-Dt^2}$, where
%$D>0$ is a constant that only depends on $\gamma$ and not on the initial 
%condition $u_0$.

\subsubsection{Blow-down rates}
We let $\gamma<0$ and assume that the initial data
satisfies $\partial_{xx}\log u_0 < 0$ and the compatibility
condition.
We start with $p>2$.
Take any $n>p-2$ and define
\[
q_n\left(t\right)=\int_{-l}^{l}\frac{1}{u^n}\,dx.
\]
We compute its derivative with respect to time
\begin{eqnarray*}
q_n'\left(t\right)&=&-n\int_{-l}^{l}\frac{u_t}{u^{n+1}}\,dx=
-n\int_{-l}^{l}\frac{\partial_{xx}\log u}{u^{n+1}}\,dx\\
&=&-2\gamma n\left[u^{p-2-n}\left(l\right)+u^{p-2-n}\left(-l\right)\right]\\
&\geq& -2\gamma n u_{\min}^{p-2-n}\geq -2n\gamma 
\left(\frac{1}{2l}\int_{-l}^l u^{-n}\right)^{\frac{n+2-p}{n}}\\
&=&\frac{-2n\gamma}{\left(2l\right)^{\frac{n+2-p}{n}}}q_n^{1-\frac{p-2}{n}}.
\end{eqnarray*}
Let
\[
\beta=\frac{-2n\gamma}{\left(2l\right)^{\frac{n+2-p}{n}}}>0.
\]
Solving the differential inequality, we get
\[
\frac{n}{p-2}\left[q_n^{\frac{p-2}{n}}\left(t\right)-
q_n^{\frac{p-2}{n}}\left(0\right)\right]\geq \beta t, 
\]
and hence
\[
\frac{1}{u_{\min}^{p-2}\left(t\right)}\geq q_n^{\frac{p-2}{n}}\left(0\right)+
\frac{\beta\left(p-2\right)}{n} t,
\]
which is equivalent to
\[
u_{\min}\left(t\right)\leq \frac{1}{\left[q_n^{\frac{p-2}{n}}\left(0\right)+
\frac{\beta\left(p-2\right)}{n} t\right]^{\frac{1}{p-2}}}.
\]
Finally, by letting $n\rightarrow\infty$ we have
\[
u_{\min}\left(t\right)\leq \frac{1}{\left[u_{\min}^{2-p}\left(0\right)+
\frac{-\gamma\left(p-2\right)}{l} t\right]^{\frac{1}{p-2}}}\sim 
\frac{1}{t^{\frac{1}{p-2}}}.
\]

Next we treat the case $p=2$. Notice that in this case, 
the differential inequality is
\[
q_n'\left(t\right)\geq \frac{-\gamma n}{l}q_n,
\]
which gives,
\[
q_n\left(t\right)\geq q_n\left(0\right)e^{-\frac{\gamma n}{l}t},
\]
that is,
\[
\frac{1}{u_{\min}\left(t\right)}\geq q_n\left(0\right)^{\frac{1}{n}}
e^{-\frac{\gamma}{l}t}.
\]
Again, taking $n\rightarrow \infty$ yields,
\[
u_{\min}\left(t\right)\leq u_{\min}\left(0\right)e^{\frac{\gamma}{l}t}.
\]

\section{Blow-down and Blow-up in finite time}
\label{blowfinite}

The results in this section complement those of Theorems \ref{thm:1DlogarithmicB}
and \ref{thm:1Dlogarithmicp2} by showing cases when finite time blow-down and blow-up
occur for the logarithmic diffusion equation.

\subsection{Finite time blow-down}
\label{subsect:finiteblowdown}
Let $\gamma<0$, and let $u_0$ such that 
$\partial_{xx}\log u_0<0$ and 
$\partial_x \log u_0\left(\pm l\right) = \pm 2\gamma u_0^{p-1}\left(\pm l\right)$. If $p<1$ then
the solution to (\ref{eq:boundaryvaluelogarithmic}) blows down 
in finite time.
Indeed, recall that the mass of $u$ is given by 
\[
m\left(t\right)\colon =\int_{-l}^{l}u\left(x,t\right)\,dx.
\]
If we write $p=1-\epsilon$, $\epsilon>0$, using that $u_{\min}\left(t\right)$
must be reached at either $x=-l$ or $x=l$ (or at both points), because $\partial_{xx}\log u<0$ holds for the
solution with initial data $u_0$ (i.e., positive curvature is preserved, Lemma 
\ref{lemma:preservingsign}), we can estimate
\begin{eqnarray*}
m'\left(t\right)&=&2\gamma\left(u^{-\epsilon}\left(l\right)+u^{-\epsilon}\left(-l\right)\right)\\
&\leq& 2\gamma u_{\min}^{-\epsilon}\left(t\right)\leq 
\frac{2\gamma}{\left(2l\right)^{-\epsilon}} m^{-\epsilon}\left(t\right).
\end{eqnarray*}

By integration we obtain
\[
\frac{1}{1+\epsilon}\left(m^{1+\epsilon}\left(t\right)-m^{1+\epsilon}\left(0\right)\right)
\leq \frac{2\gamma}{\left(2l\right)^{-\epsilon}}t,
\]
and thus
\[
m^{1+\epsilon}\left(t\right)\leq m^{1+\epsilon}\left(0\right)+
\alpha t,
\quad \alpha=\frac{2\gamma\left(1+\epsilon\right)}{\left(2l\right)^{-\epsilon}}<0,
\]
which shows that there must be blow-down in finite time.

\subsection{Finite time blow-up}
If $\gamma>0$, $\partial_{xx}\log u_0>0$, and $p>2$ there is blow up 
in finite time. Indeed, as $\partial_{xx}\log u>0$, $u_{\max}\left(t\right)$
is reached at either $x=-l$ and/or $x=l$. 
We can then calculate
\begin{eqnarray*}
m'\left(t\right)&=&2\gamma\left(u^{p-1}\left(l\right)+u^{p-1}\left(-l\right)\right)\\
&\geq& 2\gamma u_{\max}^{p-1}\left(t\right)\geq \frac{2\gamma}{\left(2l\right)^{p-1}}
m^{p-1}\left(t\right).
\end{eqnarray*}
By integration we obtain
\[
m^{p-2}\left(t\right)\geq \frac{1}{m^{2-p}\left(0\right)+\beta t},\quad
\beta=\frac{2\left(2-p\right)\gamma}{\left(2l\right)^{p-1}}<0,
\]
which shows our claim. The reader must compare the work done in this section to the
work done in \cite{fila91}.

%%%%%%%%%%%%%%%%%%
\section{Proof of Theorem \ref{thm:longtimeexistencericciflow}}
\label{sect:riccilongtime}

Next, we shall make use of the ideas used in proving Theorems 
\ref{thm:1DlogarithmicB} and \ref{thm:1Dlogarithmicp2} to prove 
the longtime existence result for the Ricci flow on a cylinder
stated in Theorem \ref{thm:longtimeexistencericciflow}.

To begin with our arguments, let $g_0$ be an arbitrary metric on
the cylinder $\mathcal{M}=\left[-l,l\right]\times \mathbb{S}^1$, and let
$u_0$ be the conformal factor such that $g_0=u_0 g_E$, where
$g_E$ is a flat metric on the cylinder with totally geodesic boundary.
Then, if $g$ is the solution to the Ricci flow equation (\ref{Ricciunnormalised2})
on the cylinder, with initial data $g_0$, we can write
$g=ug_E$, and $u$ is the solution to the logarithmic 
diffusion equation
\[
\left\{
\begin{array}{l}
\partial_t u = \Delta \log u, \quad \mbox{on}\quad \mathcal{M}\times\left[0,T\right)\\
\dfrac{\partial u}{\partial N_{g_E}}=2\varphi\left(\pm l,\theta,t\right)u^{\frac{3}{2}},
\quad \theta\in \mathbb{S}^1\\
u\left(x,\theta,0\right)=u_{0}\left(x,\theta\right), \quad
\left(x,\theta\right)\in \mathcal{M},
\end{array}
\right.
\]
where $\Delta=\partial_{xx}+\partial_{\theta\theta}$, and $\left[0,T\right)$
is the maximal time of existence for the solution to (\ref{Ricciunnormalised2}).
If we assume that $k_{g_0}=\varphi\left(\cdot, 0\right)$, then the
solution to (\ref{Ricciunnormalised2}) is at least $C^2$ on
$\overline{\mathcal{M}}\times\left[0,T\right)$, and $u$ has the same regularity.

Let
$\gamma=
\max_{\mathcal{M}\times\left[0,T\right]}\varphi$. If $\gamma\leq 0$, the
maximum principle and Hopf's boundary point lemma imply that $u$ is bounded
above. Hence, assume $\gamma>0$, and
let $v$ be solution to (\ref{eq:boundaryvaluelogarithmic}) with $p=\frac{3}{2}$,
$\gamma$ as defined above, and
initial data $v_0\left(x\right)$ such that
$$\min_{x\in\left[-l,l\right]} v_0\left(x\right)>
\max_{\left(x,\theta\right)\in\left[-l,l\right]\times\mathbb{S}^1} u_0\left(x,\theta\right),$$
and which satisfies the compatibility condition. This guarantees that $v$ is also $C^2$
on $\left[-l,l\right]\times\left[0,T\right)$.
 It is not difficult to see that $v\left(x,t\right)$ can be transformed
into a solution $w\left(x,\theta,t\right)$ to the logarithmic diffusion
equation in $\mathcal{M}=\left[-1,1\right]\times \mathbb{S}^1$ by defining
\[
w\left(x,\theta,t\right)=v\left(x,t\right).
\]
In this case, at the boundary we have
\[
\frac{\partial w}{\partial N_{g_E}}=2\gamma w^{\frac{3}{2}}.
\]
The comparison principle then shows that
$u\leq w$ on $\mathcal{M}\times\left[0,T\right)$, and by Theorem
\ref{thm:1DlogarithmicB}, we can conclude that $u$ remains bounded above
on $\left[0,T\right)$.

In a similar fashion we can show that $u$ remains bounded away from 0.
This time, we let $\gamma=\min_{\mathcal{M}\times\left[0,T\right]}\varphi$;
if $\gamma>0$ there is nothing to prove, for then the parabolic
maximum principle and 
Hopf's boundary point lemma show
that the solution $u$ remains bounded below away from 0; so assume that $\gamma<0$.
Pick any solution to (\ref{eq:boundaryvaluelogarithmic}) with $p=\frac{3}{2}$ 
and $\gamma$ as defined above, which is $C^2$ on 
$\left[-l,l\right]\times\left[0,T\right)$ and
blows down uniformly, and let us
call it $z$. Using the same trick as above we can see
$z$ as a solution to the logarithmic diffusion equation on 
$\mathcal{M}$, and also such that $z\left(\cdot, 0\right)<u_0$.
The comparison principle applies again, and this time gives us
that $z< u$ on $\mathcal{M}\times \left[0,T\right)$, which then
shows that $u$ remains bounded away from 0 on $\left[0,T\right)$.
Hence we can conclude that the solution to (\ref{Ricciunnormalised2})
can be extended past $T$; therefore, the solution 
to (\ref{Ricciunnormalised2}) is global in time, which is what we wanted to show.

\section{The logarithmic diffusion equation on a disc}
\label{sect:diffusionondisc}
Consider the boundary value problem
\begin{equation}
\label{eq:logdiffusiondisc}
\left\{
    \begin{array}{l}
    \partial_t u =\Delta \log u \quad \mbox{in}\quad D_{a}\times\left(0,T\right),\\
    \dfrac{\partial u}{\partial \eta}=2\gamma u^{p} \quad
    \mbox{on} \quad \partial D_a\times\left(0,T\right),
    \end{array}
\right.
\end{equation}
where $D_a\subset \mathbb{R}^2$ is the disc of radius $a$ centered at the the origin,
and $\eta$ represents the outward unit normal at the boundary $\partial D_a$.
In this section we use the results in \cite{brendle02, cortissoz20} to prove the following.
\begin{theorem}
\label{thm:blowdownfinite2D}
Let $p\leq 1$ and $\gamma\leq -\dfrac{1}{2a}$.
There exists an $0<\epsilon\left(a\right)<1$ such that
if $u$ is a solution to (\ref{eq:logdiffusiondisc}),
with initial data such that $0<u_0<\epsilon$, then $u$ blows down
in finite time. 
\end{theorem}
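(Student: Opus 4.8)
The plan is to mimic the one-dimensional finite-time blow-down argument of Section \ref{blowfinite} but using the mass (area) on the disc, exploiting the relation with the Ricci flow on the disc as done in \cite{brendle02, cortissoz20}. Write $m(t)=\int_{D_a} u\,dA_{g_E}$ for the mass of $u$ (equivalently the area of the evolving metric $g=u\,g_E$, where $g_E$ is the flat metric on $D_a$). Differentiating, $m'(t)=\int_{D_a}\Delta\log u\,dA = \int_{\partial D_a}\tfrac{\partial}{\partial\eta}\log u\,ds = \int_{\partial D_a} u^{-1}\tfrac{\partial u}{\partial\eta}\,ds = 2\gamma\int_{\partial D_a} u^{p-1}\,ds$. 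Since $\gamma<0$ and $p\le 1$, the exponent $p-1\le 0$, so on the boundary $u^{p-1}\ge u_{\max}(t)^{p-1}\ge\bigl(\tfrac{m(t)}{\pi a^2}\bigr)^{p-1}$ as long as $u<1$ everywhere (which I must ensure persists), giving $m'(t)\le 2\gamma\cdot 2\pi a\cdot(\pi a^2)^{1-p} m(t)^{p-1}$, a differential inequality forcing $m(t)\to 0$ in finite time exactly as in Section \ref{subsect:finiteblowdown}.

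The role of the hypothesis $\gamma\le -\tfrac{1}{2a}$ and the smallness threshold $\epsilon(a)$ is to guarantee that the solution stays below $1$ (so the sign analysis above is valid) and, more importantly, that there is no competing mechanism driving the mass back up or making the solution global. Geometrically, $\gamma=-\tfrac{1}{2a}$ is the borderline geodesic curvature below which the curvature of $g=u\,g_E$ cannot be controlled the way it is in the convergent regimes of \cite{brendle02, cortissoz20}; the condition $\gamma\le -\tfrac{1}{2a}$ together with $0<u_0<\epsilon$ should force the relevant curvature quantity to have a sign that is preserved by the flow (via the parabolic maximum principle and Hopf's boundary point lemma applied to the curvature evolution equation, the radially symmetric analogue of (\ref{eq:unriccifloweq})). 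First I would establish this preserved sign/smallness, perhaps by comparison with a suitable radially symmetric barrier built from the explicit constant-curvature or soliton solutions on the disc used in \cite{cortissoz20}, thereby keeping $u<1$ on the maximal interval of existence; then the mass inequality above closes the argument.

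The main obstacle I expect is precisely this second step: controlling the solution uniformly (keeping $u<1$, or more generally keeping the boundary values from growing) on the whole interval of existence, since unlike the one-dimensional symmetric case one no longer has the clean area-to-length estimate of Lemma \ref{lemma:areatolength} nor the simple ODE for $u_{\min}$ at the boundary. I anticipate the proof handles this by reducing to the rotationally symmetric case (if the initial data is radial, $u=u(r,t)$ and one works on an interval, recovering a 1D problem of the type already analyzed) and, for general initial data, by sandwiching between radial sub- and supersolutions via the comparison principle of Section \ref{proof:thm1}; the threshold $\epsilon(a)$ is then chosen so small that the radial supersolution itself blows down, which by comparison drags $u$ down with it. Once $u$ is trapped below $1$ for all $t<T$, the mass argument gives finite-time blow-down, i.e. $T<\infty$ and $u_{\min}(t)\to 0$ as $t\to T$.
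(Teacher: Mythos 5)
Your ``second step'' is in fact the entire proof in the paper, and you have essentially guessed it correctly: one takes the radial solution $w$ of (\ref{eq:logdiffusiondiscb}) with $\beta=0$ (the Ricci flow on the disc with positive curvature and \emph{zero} geodesic curvature), which by Lemma \ref{lem:riccidisclemma} blows down in finite time with $w_{\min}\sim w_{\max}\sim T-t$; one picks a time after which $\epsilon<w<1$, relabels it as $t=0$, and chooses $u_0<\epsilon<w_0$. The hypothesis $\gamma\leq-\frac{1}{2a}$ enters exactly where you suspected: on the boundary $\frac{\partial w}{\partial\eta}=-\frac{1}{a}w\geq-\frac{1}{a}w^{p}\geq 2\gamma w^{p}$, the first inequality because $w<1$ and $p\leq1$, the second because $2\gamma\leq-\frac{1}{a}$. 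The comparison principle then gives $u<w$ on the common interval of existence, so the maximal interval of existence of $u$ is contained in that of $w$, which is finite, and $u$ is dragged down with $w$. Note that once this is in place there is nothing left to prove: you do not need the mass inequality at all, nor do you need to separately keep $u<1$ (only the \emph{supersolution} $w$ needs to stay below $1$, and it does by construction).

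Your ``first step'' (the mass argument), besides being redundant, contains a reversed inequality: for $p<1$ the map $s\mapsto s^{p-1}$ is decreasing, so $u_{\max}\geq m/(\pi a^{2})$ gives $u_{\max}^{p-1}\leq\left(m/(\pi a^{2})\right)^{p-1}$, not $\geq$. (The 1D argument of Section \ref{subsect:finiteblowdown} works because there $u_{\min}$ is attained \emph{on the boundary} thanks to the preserved concavity of $\log u$, and $u_{\min}\leq\bar{u}$ has the right direction; Theorem \ref{thm:blowdownfinite2D} assumes no concavity of $\log u_0$, so this does not transfer.) If one insists on a mass argument, the correct observation is that $u<w<1$ on $\partial D_a$ forces $u^{p-1}\geq1$ there, hence $m'(t)\leq4\pi a\gamma<0$, a negative constant --- but again, the comparison with $w$ already yields the conclusion without this.
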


The proof of this theorem follows from the following lemma, which is a
consequence of the results in \cite{brendle02,cortissoz20},
and the comparison principle.
\begin{lemma}
\label{lem:riccidisclemma}
Consider 
\begin{equation}
\label{eq:logdiffusiondiscb}
\left\{
    \begin{array}{l}
    \partial_t w =\Delta \log w \quad \mbox{in}\quad D_{a}\times\left(0,T\right),\\
    \dfrac{\partial w}{\partial \eta}=\left(2\beta\sqrt{w}-\frac{1}{a}\right)w \quad
    \mbox{on} \quad \partial D_a\times\left(0,T\right),
    \end{array}
\right.
\end{equation}
with $\beta\geq 0$.
Let $w_0>0$ be such that $\Delta \log w_0<0$. Assume that $u_0$ also satisfies the compatibility
condition
\[
\dfrac{\partial w_0}{\partial \eta}=\left(2\beta\sqrt{w_0}-\frac{1}{a}\right)w_0.
\]
Then the solution $w$ to (\ref{eq:logdiffusiondiscb}) with initial data $w_0$
blows-down in finite time, that is, $T<\infty$. Furthermore, we have the
estimate
\[
w_{\min}\left(t\right)\sim w_{\max}\left(t\right)\sim T-t.
\]
\end{lemma}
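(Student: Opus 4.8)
The plan is to recast \eqref{eq:logdiffusiondiscb} as a Ricci flow on $D_a$ via the conformal correspondence of Section \ref{sect:riccilog}, deduce a finite extinction time from the maximum principle, and extract the blow-down rate from the Type I / asymptotically round behaviour established in \cite{brendle02, Hamilton88, cortissoz20}. The compatibility hypothesis guarantees a classical ($C^2$) solution, so all of the computations below are legitimate. Set $g(t) = w(\cdot,t)\,g_E$, with $g_E$ the Euclidean metric on $D_a$ (so $R_{g_E} = 0$ and $k_{g_E} = 1/a$). Then $\Delta\log w_0 < 0$ is exactly $R_{g_0} > 0$, and the transformation formula for the geodesic curvature turns the boundary condition of \eqref{eq:logdiffusiondiscb} into $k_g = \beta + \dfrac{1}{2a\sqrt{w}} > 0$. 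By the same argument used in Lemma \ref{lemma:preservingsign}, the curvature evolution equation \eqref{eq:unriccifloweq}, the parabolic maximum principle and Hopf's lemma show that $R_g > 0$ is preserved on all of $\overline{D_a}$ throughout the flow.

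First I would prove $T < \infty$. Let $R_{\min}(t) = \min_{\overline{D_a}} R_g(\cdot, t)$. The spatial minimum of $R_g$ can never be attained on $\partial D_a$: at a boundary minimum the first-order condition forces $\partial R_g/\partial\eta \le 0$, while the boundary condition in \eqref{eq:unriccifloweq} gives $\partial R_g/\partial\eta = k_g R_g > 0$ since $k_g > 0$ and $R_g > 0$. Hence the minimum is interior, so $R_{\min}$ satisfies $R_{\min}'(t) \ge R_{\min}(t)^2$ in the barrier sense, and comparison with the ODE $y' = y^2$ forces $R_{\min}$ to blow up at a time $\le 1/R_{\min}(0)$; therefore $T \le 1/R_{\min}(0) < \infty$.

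The heart of the matter is the behaviour near $T$. Here I would invoke \cite{brendle02, cortissoz20} (whose surface arguments build on \cite{Hamilton88}) to conclude that, exactly as for the shrinking round sphere, the singularity is Type I and asymptotically round: $R_{\min}(t)(T-t)\to 1$ and $R_{\max}(t)(T-t)\to 1$ as $t\to T$, with the error decaying at a power rate of $T-t$. Granting this, integrating $\partial_t\log w = -R_g$ gives $w(x,t) = w_0(x)\exp\!\big(-\!\int_0^t R_g(x,\tau)\,d\tau\big)$, and the asymptotics above yield $\int_0^t R_g(x,\tau)\,d\tau = -\log(T-t) + O(1)$ uniformly in $x$; hence $w(x,t)\sim T-t$ uniformly, i.e. $w_{\min}(t)\sim w_{\max}(t)\sim T-t$, which in particular says that $w$ does blow down at the finite time $T$. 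As a consistency check on the constant, one can differentiate the area: by \eqref{eq:area}, Gauss--Bonnet with $\chi(D_a) = 1$, and $ds_g = \sqrt w\,ds_{g_E}$ (so that $\int_{\partial D_a}\frac{1}{2a\sqrt w}\,ds_g = \pi$), one gets $A_g'(t) = -4\pi + 2\int_{\partial D_a}k_g\,ds_g = -2\pi + 2\beta L_g(t)$, and since $w\to 0$ uniformly we have $L_g(t), A_g(t)\to 0$, whence $A_g'(t)\to -2\pi$ and $A_g(t)\sim T-t$, consistent with $A_g(t)\in [\pi a^2 w_{\min}(t), \pi a^2 w_{\max}(t)]$.

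The step I expect to be the main obstacle is precisely this roundness input. Because the boundary geodesic curvature $\beta + \frac{1}{2a\sqrt w}$ is neither constant nor bounded as $t\to T$, one cannot quote a ``$k_g\equiv$ const'' or totally geodesic result directly, and Brendle's doubling construction does not apply verbatim. The cleanest remedy --- and the one indicated by the remark that the lemma follows from \cite{brendle02, cortissoz20} ``and the comparison principle'' --- is to sandwich $w$ between two explicit model solutions (for instance restrictions to $D_a$ of shrinking round spheres, or solutions with constant boundary geodesic curvature) by means of the comparison principle for the logarithmic diffusion equation, and to transfer both the finite extinction time and the sharp rate from these models. The delicate point there is choosing the model solutions and the comparison so that all three solutions obey one common boundary nonlinearity $f$, as that comparison principle requires.
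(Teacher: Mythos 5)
Your overall strategy --- conformal change to a positively curved Ricci flow on the disc, finite extinction time from the reaction ODE for $R_{\min}$, and the rate from $R_{\max}/R_{\min}\to 1$ together with $R_{\max}\sim 1/(T-t)$ --- is exactly the paper's, and your finite-time argument (the spatial minimum of $R_g$ must be interior because $\partial R_g/\partial\eta=k_gR_g>0$ forbids a boundary minimum, whence $R_{\min}'\ge R_{\min}^2$) is carried out more explicitly than in the paper, which simply cites \cite{cortissoz20}. The genuine gap is the step you yourself flag: the sharp rate $w_{\min}(t)\sim w_{\max}(t)\sim T-t$. The paper closes it by observing that the boundary condition in \eqref{eq:logdiffusiondiscb} is precisely the statement that the evolving metric $g=wg_E$ has \emph{constant} geodesic curvature $\beta\ge 0$, so the disc results of \cite{cortissoz20} (finite-time blow-up with $R_{\max}/R_{\min}\to1$ and $R_{\max}\sim 1/(T-t)$) apply verbatim, and one only has to integrate $\partial_t\log w=-R_g$; no doubling and no comparison argument is needed inside the lemma (the comparison principle enters afterwards, in deducing Theorem \ref{thm:blowdownfinite2D} from the lemma). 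Your proposed substitute --- sandwiching $w$ between two model solutions --- cannot recover the two-sided rate: the comparison principle produces barriers with extinction times $T_1<T<T_2$, so the lower barrier vanishes before $T$ and yields nothing near $T$, while the upper barrier only gives $w\lesssim T_2-t$, which does not degenerate at $T$. As written, the proposal proves $T<\infty$ but not the asymptotics.

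That said, your computation $k_g=\beta+\frac{1}{2a\sqrt w}$ is correct for the boundary condition as printed, and it exposes a factor-of-two slip in the paper: since $k_{g_E}=1/a$ on $\partial D_a$, constant geodesic curvature $k_g=\beta$ for $g=wg_E$ corresponds to $\partial w/\partial\eta=\left(2\beta\sqrt w-\frac{2}{a}\right)w$, not $\left(2\beta\sqrt w-\frac{1}{a}\right)w$. With the corrected coefficient the obstacle you identify disappears and the paper's one-line reduction to \cite{cortissoz20} is legitimate; with the condition as printed, the paper's assertion that $k_g\equiv\beta$ is the inaccurate step and your concern is well founded. Either way, the missing ingredient for the rate is the constant-geodesic-curvature identification (after fixing the constant), not an asymptotic roundness statement for a nonconstant, unbounded boundary curvature.
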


\begin{proof}
It is proved in \cite{cortissoz20} that the solution to the Ricci flow
on a disc with positive curvature and constant nonnegative geodesic curvature blows-up
in finite time. The following estimate for the minimum and the
maximum of the curvature is proved: 
 $\lim_{t\nearrow T} R_{\max}\left(t\right)/R_{\min}\left(t\right)\rightarrow 1$, and
 it is also shown that 
 \[
 R_{\max}\left(t\right)\sim \frac{1}{T-t}.
 \]
 Given $w$ a solution to (\ref{eq:logdiffusiondiscb}) as in the hypothesis of the lemma,
 then $g=wg_E$, where $g_E$ is the Euclidean metric on the disc $D_a$, is the
 solution to a Ricci flow on a disc with
 positive curvature and constant geodesic curvature equals to $\beta$.
 This shows that
 \[
 e^{-\int_0^t R_{\max}\left(\tau\right)\,d\tau}\leq
 w_{\min}\left(t\right)\leq w_{\max}\left(t\right)
 \leq
 e^{-\int_0^t R_{\min}\left(\tau\right)\,d\tau},
 \]
 and thus we have that
 \[
w_{\min}\left(t\right)\sim w_{\max}\left(t\right)\sim T-t,
\]
which is the conclusion of the lemma.
\end{proof}

\begin{proof}[Proof of Theorem \ref{thm:blowdownfinite2D}]
Take any solution to (\ref{eq:logdiffusiondiscb}) with $\beta=0$. By
Lemma \ref{lem:riccidisclemma}, there is $t_*$ such
that for $t>t_*$ we have that $\epsilon<w_{\min}<w_{\max}<1$. We may assume
without loss of generality that $t_*=0$. Since $\beta=0$, if $p\leq 1$,
at the boundary we have that 
\[
\frac{\partial w}{\partial \eta}=-\frac{1}{a} w\geq -\frac{1}{a} w^{p}\geq 2\gamma w^{p}.
\]
Then, by the
comparison principle $w>u$ as long as the solution $w$ exists. This implies
that the interval of existence of $u$ is contained in the maximal
interval of existence of $w$, which is finite; as $w$ blows-down so must $u$.
\end{proof}
The reader is invited to compare the previous theorem with the result proved
in Section \ref{subsect:finiteblowdown} on the finite time blow-down in the 
one dimensional case.


\begin{thebibliography}{}

\bibitem{almosleh} S. Al Mosleh, A. Gopinathan and C. D. Santangelo. 
\emph{Growth of Form in Thin Elastic Structures}. Soft Matter 41 (24) (2018). 


\bibitem{brendle02}
S. Brendle. \emph{Curvature flows on surfaces with boundary}. Math. Ann. 324 (2002), no. 3, 491--519. 

\bibitem{carlotto15}
 A. Carlotto, O. Chodosh, Otis, Y. A. Rubinstein. 
 \emph{Slowly converging Yamabe flows}. Geom. Topol. 19 (2015), no. 3, 1523--1568.

%\bibitem{cortissoz07}
%J. C. Cortissoz. \emph{The Ricci flow on the two ball with a rotationally symmetric metric}. Izv. Vyssh. Uchebn. Zaved. %Mat. No.12, 33-50, 2007. 
 


\bibitem{cortissoz16} J. C. Cortissoz, A. Murcia. \emph{The Ricci flow on a cylinder}.
Rev. Colombiana Mat. 51 (2017), no. 2, 241--257.

\bibitem{cortissoz20} J. C. Cortissoz, A. Murcia. \emph{The Ricci flow on surfaces with boundary}. 
Comm. Anal. Geom. 27 (2019), no. 2, 377--420. 

\bibitem{daskalopoulos04}
P. Daskalopoulos, R. Hamilton. \emph{Geometric estimates for the logarithmic fast diffusion equation}. Comm. Anal. Geom. 12 (2004), no. 1--2, 143--164.

\bibitem{daskalopoulos06}
P. Daskalopoulos, N. Sesum. \emph{Eternal solutions to the Ricci flow on $\mathbb{R}^2$}. 
 Int. Math. Res. Not. 2006, Art. ID 83610, 20 pp.

\bibitem{fila91}
 M. Fila, P. Quittner. \emph{The blow-up rate for the heat equation with a nonlinear boundary condition}. Math. Methods Appl. Sci. 14 (1991), no. 3, 197--205.

\bibitem{galaktionov96}
 V. A. Galaktionov, H. A. Levine. \emph{On critical Fujita exponents for heat equations with nonlinear flux conditions on the boundary}. Israel J. Math. 94 (1996), 125--146.

\bibitem{Gianniotis16}
 P. Gianniotis. 
 \emph{Boundary estimates for the Ricci flow}. Calc. Var. Partial Differential Equations 55 (2016), no. 1, Art. 9, 21 pp. 

\bibitem{Hamilton88}
 R. S. Hamilton. \emph{The Ricci flow on surfaces}. Mathematics and general relativity (Santa Cruz, CA, 1986), 237--262, Contemp. Math., 71, Amer. Math. Soc., Providence, RI, 1988.
 
 \bibitem{lappicy19}
  P. Lappicy, B. Fiedler. \emph{A Lyapunov function for fully nonlinear parabolic equations in one spatial variable}. São Paulo J. Math. Sci. 13 (2019), no. 1, 283--291.
  
  \bibitem{longman76}
  K. E. Longman, A. Hirose.
 \emph{Expansion of an electron cloud}. Phys. Lett. A 59, 285--286 (1976). 
  
  \bibitem{pulwicki}
  J. Pulwicki. \emph{Dynamics of Plant Growth: A Theory Based on Riemannian Geometry}. arXiv:1602.01778 (2016). 
  
  \bibitem{shimojo18}
   M. Shimojo, P. Tak\'a\v c, E. Yanagida. \emph{Asymptotic behavior of solutions to the logarithmic diffusion equation with a linear source}. Math. Ann. 372 (2018), no. 1--2, 429--449. 
  
  \bibitem{topping17} P. Topping, H. Yin. 
 Sharp Decay Estimates for the Logarithmic Fast Diffusion Equation and the Ricci Flow on Surfaces.  Ann. PDE 3, 6 (2017).
 
 \bibitem{vazquez07}
  J. L. V\'azquez. \emph{The porous medium equation. Mathematical theory}. Oxford Mathematical Monographs. The Clarendon Press, Oxford University Press, Oxford, 2007. xxii+624 pp. 

\end{thebibliography}
\end{document}